
\documentclass[11pt]{stijl}

\usepackage{proof}
\usepackage{amssymb} 
\usepackage{amsmath} 
\usepackage{stmaryrd}
\usepackage{euscript} 
\usepackage{latexsym}
\usepackage{mathrsfs} 
\usepackage{float} 
\usepackage{cancel}
\usepackage{color}
\usepackage{wasysym}
\usepackage[round]{natbib} 
\usepackage{chngcntr}

\counterwithin{figure}{section}
\bibliographystyle{plainnat}

\setlength{\topmargin}{-0.5in}
\setlength{\textheight}{9in}
\setlength{\oddsidemargin}{.4in}
\setlength{\evensidemargin}{.4in}
\setlength{\textwidth}{5.5in}


\newcommand{\Kf}{{\sf K4}}

\newcommand{\Sf}{{\sf S4}}
\newcommand{\GL}{{\sf GL}}

\newcommand{\Grz}{{\sf Grz}}

\newcommand{\G}{{\sf G}}
\newcommand{\GR}{{\sf GR}}

\newcommand{\Gp}{{\sf G3p}}
\newcommand{\GpR}{{\sf G3R}}

\newcommand{\GKf}{{\sf G3K4}}
\newcommand{\GGL}{{\sf G3GL}}

\newcommand{\GSf}{{\sf G3S4}}
\newcommand{\iGGL}{{\sf iG3GL}}
\newcommand{\iGKf}{{\sf iG3K4}}

\newcommand{\lgc}{{\sf L}}

\newcommand{\lang}{\ensuremath {{\EuScript L}}}

\newcommand{\defn}{\equiv _{\mbox{\em \tiny df}}} 
\newcommand{\af}{\vdash}
\newcommand{\afinf}{\vdash^\circ}

\newcommand{\adm}{\makebox{\raisebox{.4ex}{\scriptsize $\ \mid$}\raisebox{.28ex}{\footnotesize $\! \sim \,$}}}

\newcommand{\imp}{\rightarrow}

\newcommand{\en}{\wedge} 
\newcommand{\of}{\vee}

\newcommand{\bx}{\raisebox{-.01mm}{$\Box$}}
\newcommand{\dbx}{\raisebox{.13mm}{$\boxdot$}}

\newcommand{\bof}{\bigvee}
\newcommand{\ben}{\bigwedge}
\newcommand{\seq}{\Rightarrow}
\newcommand{\sml}{\preccurlyeq}

\newcommand{\smll}{\prec}

\newcommand{\De}{\Delta}
\newcommand{\Ga}{\Gamma}

\newcommand{\Sig}{\Sigma}
\newcommand{\sig}{\sigma}
\renewcommand{\phi}{\varphi}
\newcommand{\varchi}{\raisebox{1pt}{$\chi$}} 

\newcommand{\cald}{{\cal D}}

\newcommand{\calr}{{\cal R}}
\newcommand{\cals}{{\cal S}}

\newcommand{\itm}{\item[$\circ$]}

\newtheorem{Theor}{\bf Theorem}
\newenvironment{theorem}{\begin{Theor}\em }{\end{Theor}}
\newtheorem{Lemma}{\bf Lemma}
\newenvironment{lemma}{\begin{Lemma}\em }{\end{Lemma}}
\newtheorem{Coro}{\bf Corollary}
\newenvironment{corollary}{\begin{Coro}\em }{\end{Coro}}
\newtheorem{Remark}{\bf Remark}
\newenvironment{remark}{\begin{Remark}\em }{\end{Remark}}
\newtheorem{defin}{\bf Definition}

\newtheorem{exam}{\bf Example}

\newtheorem{notat}{\bf Notation}
 
\newenvironment{proof}{{\bf Proof}}{\hfill $\slot$}

\newcommand{\slot}{\hfill \large \sun}

\parindent0pt
\renewcommand{\textheight}{21cm}

\begin{document}
\title{Reasoning in circles}
\author{Rosalie Iemhoff \\
\normalsize Department of Philosophy  \\
\normalsize Utrecht University, the Netherlands \\
\normalsize r.iemhoff@uu.nl} 
\maketitle

\begin{center}
{\it For Albert, on the occassion of his 65th birthday}
\end{center}

\begin{abstract}
\noindent
  Circular proofs, introduced by Daniyar Shamkanov, are proofs in which 
  assumptions are allowed that are not axioms but do appear at least twice 
  along a branch. Shamkanov has shown that a formula belongs to the 
  provability logic $\GL$ exactly if it has a circular proof in the modal 
  logic $\Kf$. Shamkanov uses Tait style proof systems and infinitary proofs.  
  In this paper we prove the same result but then for sequent calculi and 
  without the detour via infinitary systems. We also obtain a mild generalisation 
  of the result, implying that its intuitionistic analogue holds as well. 
\end{abstract}

\begin{center}
\small{ \textit{Keywords:}  provability logic, sequent calculus, circular proofs} \\
\small{ \textit{MSC:}  03B45, 03F45, 03F07}
\end{center}

\section{Introduction}

In the spring of 2015 Lev Beklemishev told Albert Visser and me about a theorem that his then student Daniyar Shamkanov had proved, a theorem stating that when a certain form of circularity is allowed in proofs in transitive modal logic $\Kf$, the resulting proof system is sound and complete with respect to the provability logic $\GL$. Later that week Albert told me: ``Such a theorem makes me happy for the whole day''. For this Liber Amicorum, in honor of his 65th birthday and retirement, I take a closer look at this source of happiness. 

In \citep{shamkanov2014} develops a notion of {\em circular proof}, that extends  the standard notion of proof in a given Gentzen or Tait calculus by allowing derivations in which the leafs are not axioms but equal to a sequent below that leaf. For references to earlier occurrences of notions of circularity in proofs, see \citep{brotherston2006}. Clearly, proofs are special instances of circular proofs, namely those in which all leafs are axioms. Shamkanov shows that being provable in $\GL$ is equal to having a circular proof in $\Kf$. For example, the following is a circular proof of L\"ob's principle in the well-known sequent calculus for $\Kf$ as given in Section~\ref{seccalculi}.
\[
 \infer[R_\Kf]{\bx(\bx\phi\imp\phi) \seq \bx\phi}{
  \infer[L\!\imp]{\dbx (\bx\phi\imp\phi) \seq \phi}{
   \bx(\bx\phi\imp\phi) \seq \bx\phi & \bx(\bx\phi\imp\phi),\phi \seq \phi } }
\]
When I read Shamkanov's clever paper after Lev's visit to Utrecht I wondered whether his approach, which uses Tait style calculi, could be adapted for (two--sided) sequent calculi and whether it could be generalized to other logics. The answer to the first question is {\em yes}, and to the second question {\em I am not so sure}. In this paper I will explain why. 

The key ideas in the paper are certainly Shamkanov's, but I do present certain facts in a different way. Most importantly, I do not use infinitary proof systems as an intermediary step between standard and circular proof systems, as is done in \citep{shamkanov2014}.  

In trying to establish whether Shamkanov's Theorem could be generalized to other logics, I have tried, in this paper, to generalize the assumptions under which the main theorem holds.  And in doing so, I discovered that the results actually seem to very much depend on particular properties of the logic $\GL$. The only immediate corollary from the generalization is the insight that an  analogue of the main theorem holds for the intuitionistic versions of the modal logics (Theorem~\ref{thmmainint}).

\subsection{Structure of the paper}

We proceed as follows. If $\af_\G S$ denotes that $S$ has a proof in Gentzen calculus $\G$ and $\afinf_{\G}S$ denotes that $S$ has a circular proof in $\G$, then my translation of Shamkanov's Theorem in terms of the standard sequent calculi $\GKf$ and $\GGL$ for $\Kf$ and $\GL$, can be expressed as
\[
 \af_{\GGL}S \text{ if and only if }\afinf_{\GKf}S. 
\]
In this paper I generalize this to Theorem~\ref{thmmain}:
For every extension $\G$ of $\Gp$ by ordered box rules that are closed under weakening and contraction and for any slim box rules $R_1$ and $R_2$ such that $\GR_2$ is the circular companion of $\GR_1$:
\[
 \af_{\GR_1} S \text{ if and only if } \afinf_{\GR_2} S.
\]
The technical terms will be explained in the next sections, but let me mention here that all requirements are met by many sequent calculi for modal logics, except for the last requirement about modal companion. That $\GKf$ is the circular companion of $\GGL$ seems, as we will see, to depend strongly on the properties of $\GL$.  

Theorem~\ref{thmmain} consists of two directions, the one from left to right is Lemma~\ref{lemmain} and the other direction is Lemma~\ref{lemmaintoo}. Section~\ref{secresults} contains the main result and the application to intuitionistic modal logics. It also shows  why the obvious generalization to Grzegorczyk logic does not work. 

I thank two anonymous referees for useful comments on an earlier version of this paper.

\section{Logics and sequent calculi}
 \label{seccalculi}
The logics we consider are modal propositional logics, formulated in a language $\lang$ that contains constants $\top$ and $\bot$, propositional variables or atoms $p,q,r,\dots$ and the connectives $\en,\of,\neg,\imp$ and the modal operator $\bx$. The expression $\dbx\phi$ stands for $\phi\en\bx\phi$. All logics that we consider are extensions of classical propositional logics, but we do not assume them to be normal. 

We will mainly work with sequents, which are expression $\Ga \seq \De$, where $\Ga$ and $\De$ are finite multisets of formulas in $\lang$, that are interpreted as $I(\Ga \seq \De) = (\ben\Ga \imp \bof\De)$. We denote finite multisets by $\Ga,\Pi,\De,\Sig$. We also define ($a$ for antecedent, $s$ for succedent): 
\[
 (\Ga \seq \De)^a \defn \Ga \ \ \ \ (\Ga \seq \De)^s \defn \De.
\]
When sequents are used in the setting of formulas, we often write $S$ for $I(S)$, such as in $\af S$, which thus means $\af I(S)$. Multiplication of sequents is defined as   
\[
 S_1 \cdot S_2 \defn (S_1^a \cup S_2^a \seq S_1^s \cup S_2^s). 
\]
Given a multiset $\Ga$, we write $\bx\Ga$ for the multiset obtained by putting a box in front of every formula in $\Ga$, and $\dbx\Ga$ for $\Ga \cup \bx\Ga$. For a sequent $S$ we write $\bx S$ for the sequent $\bx S^a \seq \bx S^s$, and similarly for $\dbx S$. For example, $\bx(p\seq \,)$ denotes $(\bx p\seq \,)$. 

We will be interested in multisets in which the repetition of formulas occurs for certain formulas only. 
Given a multiset $\Ga$ we denote by $\Ga_{\dbx}$ the largest set $\dbx\Pi$ such that $\dbx\Pi \subseteq \Ga$, and the multiset $\Ga_{\dbx} \cup \{\phi \in \Ga \mid \phi\not\in \Ga_{\dbx} \}$ by $\Ga^*$. Thus $\Ga^*\backslash \Ga_{\dbx}$ is a set. 
With every sequent $S = (\Ga \seq \De)$ the {\em set--sequent} 
$S^* = (\Ga^*\seq \De^*)$ is associated. A sequent $S$ is a {\em set--sequent} if it is of the form $S^*_0$ for some sequent $S_0$. 
Two sequents are {\em set--equivalent} if their set--sequents are equal. 

Given formulas $\varchi(p)$ and $\phi$, $\varchi(\phi)$ denotes the result of replacing $p$ everywhere by $\phi$ in $\varchi$. For a multiset $\Ga$, we use $\varchi(\Ga)$ as 
abbreviation for the set of formulas $\{ \varchi(\phi) \mid \phi \in \Ga\}$ and 
$\varchi(S)$ for $\varchi(I(S))$. For example, 
if $\varchi = \dbx p$, then $\varchi(\{\phi,\psi\})= \{ \dbx \phi, \dbx \psi\}$. 

The complexity of formulas is defined as usual, where connectives and modal operators increase the complexity by 1. We define a partial order $\sml$ on sequents, based on the Dershowitz-Manna well-ordering 
$\sml_{dm}$ on multisets, in the usual way: $S_1 \sml S_2$ exactly if $S_1^a \cup S_1^s \sml_{dm} S_2^a \cup S_2^s$. Here $\sml_{dm}$ is the reflexive transitive closure of the ordering $\smll_{dm}^-$ between multisets, where $\Ga \smll_{dm}^-\Pi$ precisely if $\Ga$ is the result of replacing a formula in $\Pi$ by finitely many formulas of lower complexity than that formula. Furthermore, $S_1 \smll S_2$ precisely if 
$S_1 \sml S_2$ and $S_1$ and $S_2$ are not equal as sequents.

\subsection{Gentzen calculi}

A rule $R$ is an expression of the form 
\[
 \infer[R]{S_0}{S_1 & \dots & S_n}
\]
where the $S_i$ are sequents. $R_c$ denotes the formula $I(S_0)$ corresponding to the conclusion, and $R^a$ denotes the formula $\ben_i I(S_i)$ corresponding to the conjunction of the premisses. An {\em axiom} is a rule with no premisses, thus in our view, axioms are rules. 

Given an extension $\G$ of $\Gp$, to be defined below, and a rule $R$, we denote the calculus $\G+R$ by $\GR$. In the case of $\Gp$, we leave out the ``p'' and write $\GpR$ instead of {\sf G3pR}.  

Rule $R$ is a {\em box rule} if it satisfies:  
\begin{itemize}
\itm The conclusion of $R$ is of the form $\bx S \cdot (\Ga \seq \Sig)$ for some 
     sequent $S$ and two multisets $\Ga,\Sig$ not occurring, as multiset symbols, in $S$ nor in the premisses of $R$.  
\itm All premisses of $R$ consist of subformulas of formulas in $S$. 
\itm If an instance of $R$ is of the form 
\[
 \infer{(\Ga \seq \Sig) \cdot\bx (S_0 \cdot S_0' \cdot S_0')}{S_1 & \dots & S_n}
\]
there are sequents $S_i'$ and $S_i''$ such that $S_i = S_i'\cdot S_i'' \cdot S_i''$ and 
\[
 \infer{(\Ga \seq \Sig) \cdot\bx (S_0 \cdot S_0')}{S_1' \cdot S_1'' & \dots & S_n' \cdot S_n''}
\]
is an instance of $R$ as well. 
\end{itemize}

The last requirement guarantees that when box rules are added to a sequent calculus, closure under weakening and contraction is preserved, as will be proved in Lemma~\ref{lemweakcont}. 

Examples of a box rule (left) and a rule that is not a box rule (right):  
\[
 \infer{\Pi,\bx p \seq \De}{p \seq } \ \ \ \ \ \ \ 
 \infer{\Pi,\bx p \seq \De}{\Pi, p \seq \De}
\]
Important in this paper are the box rules $R_\Kf$, $R_\GL$ and $R_\Grz$, which are, respectively,  
\[
 \infer[R_{\Kf}]{\Pi,\bx \Ga \seq \bx \phi,\De}{\dbx \Ga \seq \phi} \ \ \ \ 
 \infer[R_{\GL}]{\Pi,\bx \Ga \seq \bx \phi,\De}{\dbx \Ga,\bx \phi \seq \phi} \ \ \ \ 
 \infer[R_{\Grz}]{\Pi,\bx\Ga \seq \bx\phi,\De}{\bx\Ga,\bx(\phi\imp\bx\phi) \seq \phi}
\]
A {\em Gentzen calculus} or a {\em sequent calculus} is a finite set of rules. In this paper we only consider Gentzen calculi of the form $\Gp+\calr$ for some set of box rules $\calr$, where $\Gp$ is given as follows. 

\subsection*{The Gentzen calculus $\Gp$} 
 \label{Gthp}
\renewcommand*{\arraystretch}{2.5}
\[
 \begin{array}{lll}
  \infer[{\it Ax}\ \text{($p$ an atom)}]{\Ga,p \seq p,\De}{} && \infer[L\bot]{\Ga,\bot\seq \De}{} \\
  \infer[L\en]{\Ga,\phi \en \psi \seq \De}{\Ga,\phi,\psi \seq \De} &&
   \infer[R\en]{\Ga \seq \phi \en \psi,\De}{\Ga \seq \phi,\De & \Ga \seq \psi, \De} \\ 
  \infer[L\of]{\Ga,\phi \of \psi \seq \De}{\Ga,\phi \seq \De & \Ga,\psi \seq \De} &&
   \infer[R\of]{\Ga \seq \phi\of \psi, \De}{\Ga \seq \phi, \psi, \De} \\
  \infer[L\!\imp]{\Ga, \phi \imp \psi \seq \De}{\Ga\seq \phi,\De & \Ga, \psi \seq \De} && 
   \infer[R\!\imp]{\Ga \seq \phi \imp \psi,\De}{\Ga, \phi \seq \psi,\De} 
 \end{array}
\]
\renewcommand*{\arraystretch}{1}

A {\em derivation tree} for $S$ in a calculus $\G$ is a finite tree labelled with sequents,  
where the root is labelled with $S$, and 
every inner node (not a leaf) with all its parent(s) forms an instance of a rule in $\G$. 
A {\em derivation} or {\em (standard) proof} of $S$ in $\G$ is a derivation tree for which all the 
leafs are axioms. We write $\af_\G S$ if sequent $S$ has a derivation in $\G$, and when $\G$ is clear from the context we write $\af$ instead of $\af_\G$. We write $\af_d S$ if $S$ has a proof of depth (length of the longest branch of the derivation tree) at most $d$. 

\begin{theorem}
 \citep{avron84}\label{thmcompleteness}
$\af_{\GGL} S$ if and only if $\af_{\GL} I(S)$. 
\end{theorem}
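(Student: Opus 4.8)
The plan is to read the biconditional as the soundness and completeness of the calculus $\GGL$ with respect to a Hilbert-style axiomatisation of $\GL$ — classical propositional logic, the distribution axiom $\bx(\phi\imp\psi)\imp(\bx\phi\imp\bx\psi)$, Löb's axiom $\bx(\bx\phi\imp\phi)\imp\bx\phi$, closed under modus ponens and necessitation — and to prove each direction separately.

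For soundness ($\af_{\GGL}S$ implies $\af_{\GL}I(S)$) I would argue by induction on the height of the $\GGL$-derivation of $S$, showing for each rule that if $\GL$ proves the interpretations of the premisses then it proves the interpretation of the conclusion. For the axioms and the propositional rules of $\Gp$ this is immediate from classical reasoning available in $\GL$. The only real content is the box rule $R_\GL$: there one must check that from $\af_{\GL}\ \ben\dbx\Ga\en\bx\phi\imp\phi$ one obtains $\af_{\GL}\ \ben\bx\Ga\imp\bx\phi$, since the passage to $\Pi,\bx\Ga\seq\bx\phi,\De$ is then a classical weakening. This is exactly the admissibility of Löb's rule in $\GL$: apply necessitation and distribution, use that $\GL$ derives axiom $4$ (so $\ben\bx\Ga\imp\bx\ben\dbx\Ga$ componentwise, via $\bx\psi\imp\bx(\psi\en\bx\psi)$), and close off the residual $\bx\bx\phi$-hypothesis by Löb's axiom — a standard computation.

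For completeness ($\af_{\GL}I(S)$ implies $\af_{\GGL}S$) the route is: first record the structural behaviour of $\GGL$, then derive the axioms of $\GL$ and establish closure under its rules. Weakening and contraction are admissible in $\GGL$ by Lemma~\ref{lemweakcont}, as $R_\GL$ satisfies the box-rule conditions. With these one shows: (i) $\GGL$ proves $\seq I(S)$ for every classical tautology $I(S)$, by the propositional completeness of $\Gp$; (ii) $\GGL$ is closed under necessitation — from a proof of $\seq\phi$ one gets $\bx\phi\seq\phi$ by weakening and then $\seq\bx\phi$ by the instance of $R_\GL$ with all side multisets empty; (iii) $\GGL$ proves distribution and Löb's axiom by explicit finite derivations (for Löb's axiom: $R\!\imp$, then $R_\GL$, then $L\!\imp$, each premiss reducing to an instance of the identity sequent, which is derivable by induction on formula complexity). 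Closure under modus ponens is where the work lies: from $\af_{\GGL}\seq\phi$ and $\af_{\GGL}\seq\phi\imp\psi$ one inverts $R\!\imp$ to get $\af_{\GGL}\phi\seq\psi$ and then cuts on $\phi$, so admissibility of cut in $\GGL$ is required. Finally, an induction on the Hilbert-proof of $I(S)$ gives $\af_{\GGL}\seq I(S)$, and forward application of the propositional rules together with weakening and contraction turns this into $\af_{\GGL}S$.

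The main obstacle is the cut-admissibility theorem for $\GGL$. Because the premiss of $R_\GL$ reinserts the principal formula $\phi$, boxed, into the antecedent, the usual double induction on the complexity of the cut formula and on the heights of the two derivations does not go through directly; the termination argument must be reorganised, and this is precisely where the well-founded ordering $\sml$ on sequents introduced above enters. (This delicacy is well known from the history of cut elimination for Gödel–Löb sequent calculi.) An alternative that avoids cut elimination is the semantic one: $\GGL$ is sound for finite transitive converse-well-founded Kripke frames, $\GL$ is complete for exactly those frames, and a cut-free proof-search argument — again relying on $\sml$ for termination — produces a finite countermodel whenever $S$ is not $\GGL$-derivable; but this merely relocates the difficulty into the termination of proof search.
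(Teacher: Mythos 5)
The paper does not prove this statement at all: it is quoted from \citet{avron84}, and the author explicitly declines even to prove cut admissibility for $\GGL$, referring to Avron for that as well. So there is no internal proof to compare against; the relevant comparison is with the cited literature, and there your outline is the standard one (soundness by induction on $\GGL$-derivations, with the admissibility of L\"ob's rule in $\GL$ as the only non-routine case; completeness by deriving the Hilbert axioms in $\GGL$ and closing under necessitation and modus ponens). The individual derivations you sketch are correct: the $R_\GL$ soundness computation via necessitation, axiom $4$ and L\"ob's axiom goes through, and the $\GGL$-derivations of distribution and of L\"ob's axiom (via $R\!\imp$, $R_\GL$, $L\!\imp$ down to generalized identity sequents) are exactly right.

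The genuine gap is the one you name yourself: closure of $\GGL$ under modus ponens, i.e.\ cut admissibility for $\GGL$. That is not a deferred technicality but the entire mathematical content of the completeness direction, and it is precisely the point where the literature stumbled --- Leivant's 1981 cut-elimination argument for this calculus was flawed, and repairing it (Valentini, and Avron in the cited paper) requires a genuinely different induction, typically a triple induction adding the ``width'' or number of boxed occurrences of the cut formula, or a global transformation; the ordering $\sml$ of the present paper is designed for termination of proof search, not for organising cut reduction, so invoking it here does not by itself supply the missing measure. Your semantic alternative (soundness for finite transitive conversely well-founded frames plus a terminating countermodel construction) is a legitimate escape route, but as you note it relocates rather than removes the difficulty. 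As it stands, then, your text is a correct reduction of the theorem to cut admissibility for $\GGL$, together with a correct proof of everything else; to be a proof of the theorem it would need either that cut-elimination argument in full or, like the paper, an explicit appeal to \citet{avron84} for it.
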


A substitution $\sig$ is a map from formulas in $\lang$ to formulas in $\lang$ that commutes with the connectives and the modal operator. $\sig S$ denotes the sequent $\{ \sig \phi \mid \phi \in S^a\} \seq \{ \sig \phi \mid \phi \in S^s \}$. We say that $\phi$ 
{\em admissibly derives} $\psi$ in a logic $\lgc$, notation $\phi \adm_\lgc \psi$, if for every substitution $\sig$, if $\af_\lgc \sig \phi$, then $\af_\lgc \sig \psi$. For sequents $S$ and $S'$, $S$ {\em admissibly derives} $S'$ in a calculus $\G$, notation $S \adm_\G S'$, if for every substitution $\sig$, if $\af_\G \sig S$, then $\af_\G \sig S'$. Lemmas~\ref{lemloeb} below provides a typical example of admissibility. Clearly, if $\phi \af_\lgc \psi$, then $\phi \adm_\lgc \psi$. But the converse is not always the case, more on this topic can be found in \cite{jerabek05}.  

A leaf with label $S$ for which there is a node at its branch properly below it with the 
same label $S$ is {\em circular}.
A {\em circular derivation} or {\em circular proof} of $S$ in a calculus $\G$ is a derivation tree for $S$ in which every leaf either is an axiom of $\G$ or is circular. 
We write $\afinf_\G S$ if sequent $S$ has a circular derivation in $\G$. 
A circular derivation is in particular a derivation tree. 

Clearly, $\af_\G S$ implies $\afinf_\G S$, but not vice versa, as the following circular proof of the sequent version of {\em L\"ob's principle} shows.
\[
 \infer[R_\Kf]{\bx(\bx\phi\imp\phi) \seq \bx\phi}{
  \infer[L\!\imp]{\dbx (\bx\phi\imp\phi) \seq \phi}{
   \bx(\bx\phi\imp\phi) \seq \bx\phi & \bx(\bx\phi\imp\phi),\phi \seq \phi } }
\]
Thus we can conclude that the sequent version of L\"ob's principle has a circular proof in $\Kf$: 
$\afinf_\Kf \bx(\bx\phi\imp\phi) \seq \bx\phi$. As the principle is not provable in $\Kf$ this shows that $\afinf_\Kf$ is strictly stronger than $\af_\Kf$. 
One of the corollaries of the main theorem of this note is that, actually, a sequent has a proof in $\GL$ if and only if it has a circular proof in $\Kf$.  

If we weaken the requirement of circular leafs to: there is a node at its branch properly below it with a label that has the same set-sequent as $S$, the system is no longer sound, as the following circular proof shows. 
\[
 \infer[L\en]{\phi \en \psi,\phi \en \psi,\phi,\psi \seq \,}{\phi \en \psi,\phi,\psi,\phi,\psi  \seq \,}
\]

\begin{lemma}
 \label{lemloeb}
$\bx\Pi,\dbx\Sig,\bx \phi \seq \phi \adm_{\GGL} \bx\Pi,\dbx\Sig \seq \phi$. 
\end{lemma}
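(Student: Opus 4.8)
The plan is to first unfold the admissibility claim into a uniform derivability claim, then obtain a ``boxed'' auxiliary sequent by a single application of $R_\GL$, and finally combine the two by cut (or, if one prefers, semantically). Since $S\adm_{\GGL}S'$ means that $\af_{\GGL}\sig S$ implies $\af_{\GGL}\sig S'$ for every substitution $\sig$, and since $\sig$ commutes with $\bx$ and sends the multiset variables $\Pi,\Sig$ to multisets and $\phi$ to a formula, the sequent $\sig(\bx\Pi,\dbx\Sig,\bx\phi\seq\phi)$ is literally again of the form $\bx\Pi',\dbx\Sig',\bx\phi'\seq\phi'$, and similarly for $S'$. Hence it suffices to prove, uniformly in the multisets $\Pi,\Sig$ and the formula $\phi$, the implication
\[
 \af_{\GGL}\bx\Pi,\dbx\Sig,\bx\phi\seq\phi \ \Longrightarrow\ \af_{\GGL}\bx\Pi,\dbx\Sig\seq\phi .
\]

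So assume $\af_{\GGL}\bx\Pi,\dbx\Sig,\bx\phi\seq\phi$. The first step is to pass to $\af_{\GGL}\bx\Pi,\dbx\Sig\seq\bx\phi$. By Lemma~\ref{lemweakcont}, $\GGL$ is closed under weakening, so adding the multiset $\Pi$ on the left of the assumption and using $\Pi\cup\bx\Pi=\dbx\Pi$ gives $\af_{\GGL}\dbx\Pi,\dbx\Sig,\bx\phi\seq\phi$. This is precisely the premise of the instance of $R_\GL$ whose side antecedent is $\Sig$, whose side succedent is empty, with $\Ga:=\Pi\cup\Sig$ and principal formula $\phi$; its conclusion is $\Sig,\bx(\Pi\cup\Sig)\seq\bx\phi$, i.e.\ $\bx\Pi,\dbx\Sig\seq\bx\phi$. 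Hence $\af_{\GGL}\bx\Pi,\dbx\Sig\seq\bx\phi$.

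The second step is to combine the assumption $\af_{\GGL}\bx\Pi,\dbx\Sig,\bx\phi\seq\phi$ with $\af_{\GGL}\bx\Pi,\dbx\Sig\seq\bx\phi$ by a cut on $\bx\phi$, followed by contractions (again Lemma~\ref{lemweakcont}), yielding $\af_{\GGL}\bx\Pi,\dbx\Sig\seq\phi$, as required. If one wants to avoid invoking cut admissibility of $\GGL$, the same step goes through via Theorem~\ref{thmcompleteness}: the two derivations give $\af_\GL\big(\ben(\bx\Pi,\dbx\Sig,\bx\phi)\imp\phi\big)$ and $\af_\GL\big(\ben(\bx\Pi,\dbx\Sig)\imp\bx\phi\big)$, from which classical propositional reasoning inside $\GL$ yields $\af_\GL\big(\ben(\bx\Pi,\dbx\Sig)\imp\phi\big)$, and Theorem~\ref{thmcompleteness} transports this back to $\af_{\GGL}\bx\Pi,\dbx\Sig\seq\phi$.

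The argument uses no induction, and the only place that needs a careful check is that the displayed application of $R_\GL$ really is a legitimate instance --- in particular that the unboxed copies $\Sig$ hidden inside $\dbx\Sig$ may be read as the side antecedent of the rule, and that $\dbx(\Pi\cup\Sig)=\dbx\Pi,\dbx\Sig$. This is also the step that makes the lemma sensitive to the exact shape of the box rule: it succeeds because the premise of $R_\GL$ carries $\bx\phi$ on the left, which is exactly what permits the boxed sequent $\bx\Pi,\dbx\Sig\seq\bx\phi$ to be reached in one step; for $R_\Kf$ the analogous move is not available, matching the paper's theme that the phenomenon hinges on special properties of $\GL$.
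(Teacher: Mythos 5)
Your proof is correct, and it takes a genuinely different route from the paper's. The paper derives $\dbx\Pi,\dbx\Sig \seq \bx\phi\imp\phi$ by $R\!\imp$ and weakening, boxes the whole sequent using the admissible rule $S \adm_{\GGL} \bx S$ to obtain $\bx\Pi,\bx\Sig \seq \bx(\bx\phi\imp\phi)$, applies L\"ob's principle to reach $\bx\Pi,\bx\Sig \seq \bx\phi$, and only then combines this with the hypothesis. You compress those first three steps into a single application of $R_{\GL}$ with $\Ga = \Pi\cup\Sig$, the unboxed copies of $\Sig$ absorbed into the context multiset of the conclusion --- a legitimate instance, as your careful check of $\dbx(\Pi\cup\Sig)=\dbx\Pi,\dbx\Sig$ confirms. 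This is arguably cleaner: it avoids relying on the admissibility of necessitation in $\GGL$, which the paper invokes but does not prove, and it makes explicit that L\"ob's principle is already wired into the shape of $R_{\GL}$ (the $\bx\phi$ in the premise), which is exactly the sensitivity you flag at the end. Both arguments then need the same glue to combine $\bx\Pi,\dbx\Sig\seq\bx\phi$ with the hypothesis, namely a cut on $\bx\phi$; the paper performs this implicitly in its last step (``using first line''), whereas you name it and usefully observe that it can be replaced by a round trip through $\GL$ via Theorem~\ref{thmcompleteness}, so the lemma need not depend on cut admissibility for $\GGL$. Your preliminary reduction of the schematic admissibility claim to a uniform derivability implication (since substitutions preserve the shape $\bx\Pi,\dbx\Sig,\bx\phi\seq\phi$) is also a point the paper leaves tacit.
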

\begin{proof}
The following steps prove the lemma, using in the third step that $S \adm_\GGL \bx S$ for any $S$. 
\[
 \begin{array}{ll}
  \bx\Pi,\dbx\Sig,\bx \phi \seq \phi & \ \af_{\GGL} \\
  \dbx\Pi,\dbx\Sig \seq \bx \phi \imp \phi & \adm_{\GGL} \\
  \bx\Pi,\bx\Sig \seq \bx(\bx\phi \imp \phi) & \adm_{\GGL} \\
  \bx\Pi,\bx\Sig \seq \bx\phi & \ \af_{\GGL} \text{(using first line)} \\ 
  \bx\Pi,\dbx\Sig \seq \phi.
 \end{array}
\]
\end{proof}

\subsection{Weakening and contraction}

\begin{lemma} (Inversion Lemma)
 \label{leminversion}
If $\calr$ is a set of box rules, then in $\Gp+\calr$ the following holds.

\begin{enumerate}
\item $\af_d \Ga, \phi \en \psi \seq \De$ implies $\af_d \Ga, \phi,\psi \seq \De$. 
\item $\af_d \Ga, \phi_0 \of \phi_1 \seq \De$ implies $\af_d \Ga, \phi_i \seq \De$ for $i=0,1$. 
\item $\af_d \Ga, \phi \imp \psi \seq \De$ implies $\af_d \Ga, \psi \seq \De$ and $\af_d\Ga\seq \phi,\De$.
\item $\af_d \Ga \seq \phi_0 \en \phi_1,\De$ implies $\af_d \Ga \seq \psi_i,\De$ for $i=0,1$. 
\item $\af_d \Ga \seq \phi \of \psi,\De$ implies $\af_d \Ga\seq \phi,\psi, \De$. 
\item $\af_d \Ga \seq \phi \imp \psi,\De$ implies $\af_d \Ga, \phi \seq \psi, \De$. 
\end{enumerate}

\end{lemma}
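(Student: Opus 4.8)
The plan is to prove all six clauses by induction on $d$, the depth of the given derivation, with a case distinction on the last rule $R$ applied. The six statements do not interact — in each inductive step one only pushes the distinguished formula through the premisses and reapplies the same rule $R$ — so one may run a separate induction for each clause. I would present the argument in detail for clause~1, the inversion of a conjunction in the antecedent, and note that the others are entirely analogous, the succedent clauses 4--6 being the obvious left--right mirrors of 1--3.

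So suppose $\af_d \Ga,\phi\en\psi\seq\De$ via a derivation whose last rule is $R$. If $R$ is an axiom (${\it Ax}$ or $L\bot$), then $\Ga,\phi\en\psi\seq\De$ is already an axiom, and the witnessing formula — an atom occurring on both sides, or $\bot$ in the antecedent — cannot be the compound formula $\phi\en\psi$; hence it still witnesses that $\Ga,\phi,\psi\seq\De$ is an axiom, so the conclusion holds (for arbitrary $d$). If $R=L\en$ with the distinguished occurrence of $\phi\en\psi$ as principal formula, then the unique premiss is exactly $\Ga,\phi,\psi\seq\De$, which is derivable at depth $\le d-1\le d$.

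In the remaining cases the distinguished $\phi\en\psi$ is a side formula of $R$. If $R$ is a propositional rule of $\Gp$, then $\phi\en\psi$ occurs in the shared context of every premiss; applying the induction hypothesis of clause~1 to each premiss replaces that occurrence by $\phi,\psi$ while keeping the depth $\le d-1$, and reapplying $R$ yields $\af_d\Ga,\phi,\psi\seq\De$. If $R$ is a box rule from $\calr$, its conclusion has the form $\bx T\cdot(\Pi\seq\Sig)$; since $\phi\en\psi$ is not a boxed formula, the distinguished occurrence lies in the context multisets $\Pi,\Sig$, which by the definition of box rule do not occur, as multiset symbols, in $T$ nor in the premisses of $R$. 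Hence replacing that occurrence of $\phi\en\psi$ by $\phi,\psi$ in the context produces another instance of $R$ with exactly the same premisses, already derivable at depth $\le d-1$; reapplying $R$ gives the claim at depth $\le d$.

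Clauses 2 and 3 are handled the same way: when the inverted disjunction or implication is principal, the relevant premiss of $L\of$ (respectively the appropriate premiss of $L\!\imp$) is literally the desired sequent, and the side-formula and box-rule cases are as above. I do not expect a genuine obstacle here — the G3-style shared-context formulation of $\Gp$ is designed precisely so that inversion is depth-preserving and requires no appeal to weakening or contraction. The only point demanding a little care is the box-rule case, where one must invoke the stipulation that the extra context multisets of a box rule are disjoint, as multiset symbols, from its principal sequent and from its premisses; this is exactly what legitimises the substitution carried out in the context.
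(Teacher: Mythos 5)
Your proof is correct and follows essentially the same route as the paper's: induction on the depth $d$, the standard Troelstra--Schwichtenberg argument for the rules of $\Gp$, and, for a box rule, the observation that an unboxed formula in the conclusion necessarily sits in the free context multisets and can therefore be replaced without disturbing the premisses. The paper states this in one sentence where you spell it out, but the content is identical.
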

\begin{proof}
Analogues to the proof of Lemma 5.1.6 in \citep{troelstra&schwichtenberg96}. 
With induction to $d$. The case that $d=1$ is straightforward. In the induction step we consider the last inference of the derivation and distinguish by cases. For inferences that are instances of rules in $\Gp$ we reason as in \citep{troelstra&schwichtenberg96}. For an instance $S_1 \dots S_n/(\Pi \seq \Sig) \cdot \bx S_0$ of a box rule $R$ in $\calr$, it follows that any formula $\phi$ in the conclusion that is not boxed  
can be replaced by any formula $\psi$ and still have a valid proof, thus proving that also in this case all six properties in the lemma hold. 
\end{proof}

\begin{lemma} 
 \label{lemweakcont}
For any set of box rules $\calr$ weakening and contraction are depth preserving admissible in the calculus $\Gp+\calr$: In $\Gp+\calr$, for any sequents $S$ and $S'$ the following holds. 

\begin{itemize}
\itm If $\af_d S$, then $\af_d S'\cdot S$.  
\itm If $\af_d S'\cdot S'\cdot S$, then $\af_d S'\cdot S$. 
\end{itemize}

\end{lemma}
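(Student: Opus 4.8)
The proof will be by induction on the depth $d$ of the given derivation, treating weakening and contraction separately (or simultaneously, since the inductive hypotheses are needed for both). For weakening, if $\af_d S$, I will look at the last rule applied. If $S$ is an axiom, then $S' \cdot S$ is still an axiom of the same form, since $\textit{Ax}$ and $L\bot$ are closed under adding arbitrary multisets to antecedent and succedent. If the last rule is a propositional rule of $\Gp$, I push the weakening into the premisses by the induction hypothesis and reapply the rule — the standard argument of \citep{troelstra&schwichtenberg96}, using that the principal formula of $S$ is still present after adding $S'$. If the last rule is a box rule $R \in \calr$, the conclusion has the shape $\bx S_0 \cdot (\Ga \seq \Sig)$ where $\Ga, \Sig$ are the "context" multisets that do not occur in $S_0$ or in the premisses. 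Adding $S'$ simply enlarges this context to $\Ga \cup S'^a \seq \Sig \cup S'^s$, which by the definition of box rule is again a legitimate instance of $R$ with the same premisses; so no change to the derivation above is needed and the depth is preserved.

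For contraction, suppose $\af_d S' \cdot S' \cdot S$ and argue again by cases on the last rule. The only subtle cases are when one of the two copies of a formula of $S'$ is principal in the last inference. For propositional rules this is handled exactly as in \citep{troelstra&schwichtenberg96}: invert using Lemma~\ref{leminversion} to bring both copies of the principal formula down to its immediate subformulas in the premiss(es), apply the induction hypothesis to contract the duplicated subformulas (and the untouched part of $S'$), and reapply the rule; depth is preserved because inversion is depth-preserving. For a box rule $R$ with conclusion $\bx S_0 \cdot (\Ga \seq \Sig)$, the duplicated material of $S'$ lies partly in the context $\Ga \seq \Sig$ and partly in the boxed part $\bx S_0$. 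The contraction inside the context is immediate: it just shrinks $\Ga, \Sig$, and by the box-rule definition the result is still an instance of $R$ with unchanged premisses. The contraction inside $\bx S_0$ is exactly what the third clause in the definition of box rule was designed for: if the conclusion is $(\Ga \seq \Sig) \cdot \bx(S_0 \cdot S_0' \cdot S_0')$, that clause supplies premisses $S_i' \cdot S_i''$ making $(\Ga \seq \Sig) \cdot \bx(S_0 \cdot S_0')$ an instance of $R$, and the new premisses $S_i' \cdot S_i''$ are obtained from the old ones $S_i = S_i' \cdot S_i'' \cdot S_i''$ by a contraction, to which the induction hypothesis applies (the premisses have strictly smaller depth). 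Reassembling gives a derivation of $S' \cdot S$ of depth $\le d$.

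The main obstacle — and the only place where real care is needed — is the box-rule case of contraction when the duplicated formulas straddle the boundary between the boxed part and the context, and in particular when a formula $\bx\psi$ appears once "actively" inside $\bx S_0$ and once in the context $\Pi$ or $\De$. I will need to check that the third clause of the box-rule definition can be invoked in exactly the configuration that arises, possibly after first using the (depth-preserving, already-established) weakening direction to normalise the placement of the contracted copies, and that the premiss-level contraction it produces is of a strictly smaller derivation so the induction hypothesis is available. Once this bookkeeping is set up, the rest is routine and parallels Lemma 5.1.6 of \citep{troelstra&schwichtenberg96}.
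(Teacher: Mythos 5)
Your plan matches the paper's proof essentially step for step: induction on depth, weakening handled by enlarging the free context of a box rule (same premisses, same depth), and contraction handled via the Inversion Lemma for the $\Gp$ rules and via the third clause of the box-rule definition for rules in $\calr$, with the induction hypothesis applied to the resulting premiss-level contractions. The one configuration you flag as the main remaining obstacle --- one copy of the contracted formula inside $\bx S_0$ and one in the context --- is in fact immediate and needs no normalisation by weakening: since $\Ga$ and $\Sig$ are arbitrary context multisets not occurring in $S_0$ or the premisses, deleting the copy that sits in the context yields another instance of the very same rule with unchanged premisses, which is exactly how the paper dispatches its cases (2) and (3).
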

\begin{proof}
We prove the lemma with induction to $d$. The proof for weakening is straightforward and therefore left to the reader. The key ingredient is the observation that for any instance 
\[
 \infer{S_0}{S_1 & \dots & S_n}
\]
of a box rule $R$ and any sequent $S$, 
\[
 \infer{S \cdot S_0}{S_1 & \dots & S_n} 
\]
is an instance of $R$ as well. 

We turn to contraction. Suppose $\Gp+\calr$ derives $S' \cdot S' \cdot S$. 
Clearly, it suffices to treat the case that $S'$ consist of a single formula, say $\phi$. We treat the case that $S' = (\phi \seq \,)$, the other case being analogous. 
If $d=1$, then $S' \cdot S' \cdot S$ is an instance of an axiom. If it is an axiom of $\Gp$, inspection of the possible axioms shows that whence $S' \cdot S$ is an instance of that axiom too. If the axiom belongs to $\calr$, then the third requirement in the definition of box rules implies that $S' \cdot S$ is an instance of the axiom too. 

If $d>1$, consider the last inference 
\begin{equation}
 \label{eqcontraction}
 \infer{S' \cdot S' \cdot S}{S_1 & \dots & S_n}
\end{equation}
of the derivation. If it is an instance of a rule $R$ in $\calr$, $S' \cdot S' \cdot S = (\Ga \seq \Sig)\cdot \bx S_0$ for some $\Ga,\Sig$ and $S_0$. There are several cases to consider: (1) $\phi$ occurs twice in $\bx S_0^a$ or (2) $\phi$ occurs twice in $\Ga$ or (3) 
$\phi$ occurs in $S_0^a$ and $\Ga$. 

In case (1) the third requirement in the definition of box rules implies that there exist $S_i'$ and $S_i''$ such that $S_i = S_i'\cdot S_i'\cdot S_i''$ and 
\[
 \infer{S' \cdot S}{S_1'\cdot S_1'' & \dots & S_n' \cdot S_n''}
\]
is an instance of $R$. By the induction hypothesis, the $S_i'\cdot S_i''$ have proofs of depth smaller than $d$, which proofs that $S' \cdot S$ has proof of depth at most $d$.  
In cases (2) and (3) it follows that 
\[
  \infer{S' \cdot S}{S_1 & \dots & S_n}
\]
is an instance of $R$, and we are done immediately. 

If \eqref{eqcontraction} is an instance of a rule of $\Gp$, we have to distinguish by cases. We treat the left implication rule. Therefore assume \eqref{eqcontraction} is of the form 
\[
 \infer[R]{\Ga,\phi \imp \psi \seq \De}{\Ga \seq \phi,\De & \Ga,\psi \seq \De}
\]
where either $\De$ or $\Ga$ contains a formula twice or $\Ga$ contains $\phi\imp \psi$. In the first two cases the induction hypothesis immediately applies. In the last case, by applying Lemma~\ref{leminversion} to the two premisses, it follows that  
$\Ga\backslash\{\phi \imp \psi\} \seq \phi,\phi,\De$ and 
$\Ga\backslash\{\phi \imp \psi\},\psi,\psi \seq \De$ have proofs of depth $<d$. Hence so do 
$\Ga\backslash\{\phi \imp \psi\} \seq \phi,\De$ and 
$\Ga\backslash\{\phi \imp \psi\},\psi \seq \De$ by the induction hypothesis. An application of $L\imp$ gives $\Ga\backslash\{\phi \imp \psi\},\phi\imp\psi \seq \De$. 

\end{proof}

\begin{corollary}
 \label{corstructural}
Weakening and contraction are admissible in $\GKf$ and $\GGL$. 
\end{corollary}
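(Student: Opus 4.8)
The plan is to read this statement off from Lemma~\ref{lemweakcont}. The calculi $\GKf$ and $\GGL$ are exactly $\Gp+\{R_\Kf\}$ and $\Gp+\{R_\GL\}$, so as soon as $R_\Kf$ and $R_\GL$ are known to be box rules, Lemma~\ref{lemweakcont} gives depth-preserving admissibility of weakening and contraction in both calculi, which in particular yields the (plain) admissibility the corollary asks for. So the first step is to state this reduction, and the only real work is the verification that $R_\Kf$ and $R_\GL$ meet the three clauses of the definition of a box rule.

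The second step carries out that verification. In both rules the conclusion has the form $\bx S\cdot(\Pi\seq\De)$ with $S=(\Ga\seq\phi)$ and with the context multisets $\Pi,\De$ not occurring in $S$ or in the premise, and the premise consists of $\phi$ together with subformulas and boxings of formulas of $\Ga$; hence the first two clauses hold by inspection. For the third clause one has to do a little multiset bookkeeping, and this is the step to carry out with some care. Consider an instance of $R_\GL$ whose conclusion is $(\Pi\seq\De)\cdot\bx(S_0\cdot S_0'\cdot S_0')$. Since the succedent of $\bx S$ is the singleton $\{\bx\phi\}$, the succedent of $S_0'$ must be empty, so write $S_0=(\Ga_0\seq\phi)$ and $S_0'=(\Ga_0'\seq\,)$; the single premise is then $S_1=(\dbx(\Ga_0\cup\Ga_0'\cup\Ga_0'),\bx\phi\seq\phi)$. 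Putting $S_1'=(\dbx\Ga_0,\bx\phi\seq\phi)$ and $S_1''=(\dbx\Ga_0'\seq\,)$ we get $S_1=S_1'\cdot S_1''\cdot S_1''$, while $S_1'\cdot S_1''=(\dbx(\Ga_0\cup\Ga_0'),\bx\phi\seq\phi)$ is precisely the premise that $R_\GL$ prescribes for the conclusion $(\Pi\seq\De)\cdot\bx(S_0\cdot S_0')$; so the contracted inference is again an instance of $R_\GL$. The case of $R_\Kf$ is identical except that no $\bx\phi$ appears in the antecedents. Hence $\{R_\Kf\}$ and $\{R_\GL\}$ are sets of box rules, and Lemma~\ref{lemweakcont} applies, completing the argument.

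There is no genuine obstacle here: the corollary is engineered to fall out of Lemma~\ref{lemweakcont}, and the only mildly delicate point is the third box-rule clause for $R_\GL$, which works out exactly because the doubling of the sub-sequent $S_0'$ on the conclusion side is mirrored by the doubling of $\dbx\Ga_0'$ inside the premise of the rule. Indeed, since $R_\Kf$ and $R_\GL$ were introduced earlier in this section precisely as instances of box rules, one could present the proof as nothing more than an appeal to that fact together with Lemma~\ref{lemweakcont}.
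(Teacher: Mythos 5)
Your proposal is correct and is exactly the argument the paper intends: the corollary is stated without proof as an immediate consequence of Lemma~\ref{lemweakcont}, the paper having already declared $R_\Kf$ and $R_\GL$ to be box rules. Your explicit verification of the third box-rule clause (splitting the premise as $S_1'\cdot S_1''\cdot S_1''$ with $S_1''=(\dbx\Ga_0'\seq\,)$) is a correct filling-in of that assertion.
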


In this paper we do not need the admissibility of cut in $\GKf$ and $\GGL$, but it is worth mentioning that the rule is indeed admissible.  For a proof, see, for example, \citep{avron84}.

\subsection{Ordered rules and proofs}

A rule is {\em ordered} if all its premisses are $\smll$--lower than its conclusion and consist solely of subformulas of formulas in the conclusion. 

An instance of a rule is a {\em set}--instance if the premisses are set--sequents. Given a set of rules $\calr$, a proof is {\em $\calr$--set} if every instance in the proof of a rule in $\calr$ is a set--instance. 

Given a calculus \G, denote by $\calr_\G$ the set of those rules in $\G$ in which the premisses are not $\smll$-lower than the conclusion. 
A calculus $\G$ is {\em ordered} if  every provable sequent has a proof that is $\calr_\G$-set.  

A rule $R$ is {\em slim} if for every instance $S_1 \dots S_n/S$ of it,  
$S_1^* \dots S_n^*/S$ is an instance of $R$ as well. 
Observe that both $R_\GL$ and $R_\Kf$ are slim rules. 


\begin{lemma} 
 \label{lemfinite}
For every set $\calr$ of box rules that are slim or ordered: for any sequent $S$ provable in $\Gp+\calr$, there is a finite set of sequents $\cals$ such that in any proof of $S$ in $\Gp+\calr$ that is $\calr$--set, only sequents in $\cals$ occur. 
\end{lemma}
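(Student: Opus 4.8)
The plan is to produce, for each provable $S$, a single finite set $\cals$ of sequents that contains every sequent occurring in \emph{any} $\calr$-set proof of $S$; so fix such a proof $\pi$. The set $\cals$ will be cut out by two bounds on the sequents of $\pi$: a bound on the formulas that can occur, and a bound on the ``size'' $|T|$ of a sequent, which I take to be the sum over all formula occurrences $\phi$ in $T$ (counted with multiplicity) of $c(\phi)+1$, where $c(\phi)$ is the complexity of $\phi$. Since $|T|$ is at least the number of formula occurrences in $T$, for any finite, subformula-closed set of formulas $F$ and any constant $W$ the set $\{\,T \mid \text{every formula of }T\text{ lies in }F \text{ and } |T|\le W\,\}$ is finite; this set, for suitable $F$ and $W$, will be $\cals$.

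I would verify the two bounds together by a single induction on the distance of a node of $\pi$ from its root. Put $F := \Sub(S)$, the finite and subformula-closed set of subformulas of formulas of $S$, let $M$ be the maximum of $c(\chi)+1$ over $\chi\in F$, and set $W := \max\bigl(|S|,\ 2\,|F|\,M\bigr)$. The root, labelled $S$, has all its formulas in $F$ and $|S|\le W$. For the inductive step let $C$ be the conclusion of the rule applied at the node under consideration. If the rule belongs to $\Gp$, every premiss arises from $C$ by replacing the principal formula by finitely many of its immediate subformulas; hence the premiss has all formulas in $F$, and a rule-by-rule inspection shows that no rule of $\Gp$ increases the size, so the premiss has size at most $|C|\le W$. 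If the rule is a box rule, then $C = \bx S_0 \cdot (\Ga\seq\Sig)$ and every premiss consists of subformulas of formulas of $S_0$; since each formula of $S_0$ occurs boxed in $C$, those subformulas lie in $\Sub(C)$, which is contained in $F$ by the induction hypothesis together with the fact that $F$ is subformula-closed. Moreover, as $\pi$ is $\calr$-set, each such premiss is a set-sequent, and a set-sequent contains every formula with multiplicity at most $2$: its antecedent is $\Pi^* = \Pi_{\dbx}\cup\{\phi\in\Pi \mid \phi\notin\Pi_{\dbx}\}$, the multiset union of $\Pi_{\dbx}=\dbx\Sigma$ --- in which every formula has multiplicity at most $2$, since $\Sigma$ and hence $\bx\Sigma$ are sets --- with the set $\{\phi\in\Pi \mid \phi\notin\Pi_{\dbx}\}$, which is disjoint from the support of $\Pi_{\dbx}$; and similarly for the succedent. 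Hence such a premiss has at most $2\,|F|$ formula occurrences, each contributing at most $M$, so its size is at most $2\,|F|\,M\le W$. This closes the induction, and taking $\cals$ as above with this $F$ and $W$ yields a finite set, depending only on $S$, that contains every sequent of the arbitrary $\calr$-set proof $\pi$.

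The routine ingredients here are the subformula property and the rule-by-rule check that the rules of $\Gp$ do not increase the size. The step that carries the real weight is the multiplicity bound on box-rule premisses: the subformula property alone does not suffice, because a finite set of formulas still spans infinitely many multiset-sequents, so one genuinely needs the $\calr$-set hypothesis --- it forces box-rule premisses into the bounded-multiplicity form of set-sequents, whereas without it a box rule could in principle reintroduce a subformula in a premiss arbitrarily often and no uniform finite $\cals$ would exist.
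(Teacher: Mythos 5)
Your proof is correct and follows the same basic strategy as the paper's: the subformula property confines all formulas occurring in the proof to the finite set $\Sub(S)$, the $\calr$--set hypothesis forces every box-rule premiss to be a set--sequent and hence to have bounded multiplicities, and the only remaining issue is to control multiplicities along the stretches of $\Gp$--inferences in between. Where the two arguments differ is in how that last point is handled: the paper closes $\cals$ under ``being $\smll$-lower than a set--sequent over $\Sub(S)$'', while you introduce the explicit weight $|T|=\sum(c(\phi)+1)$ and check rule by rule that no $\Gp$--inference increases it. Your version is in fact the more careful one. The down-set of a sequent under $\smll$ is not finite as literally defined (in the Dershowitz--Manna ordering a formula may be replaced by \emph{arbitrarily many} formulas of lower complexity, so already $\{p^n\}\smll\{p\en p\}$ for every $n$), so the paper's two-line argument implicitly relies on exactly the kind of quantitative bound that your non-increasing size measure makes explicit; the $+1$ in your weight is what makes the two-premiss-formula rules such as $L\en$ and $R\!\imp$ size-preserving rather than size-increasing. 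Two trivial remarks: your bound $2|F|M$ on the size of a box-rule premiss should be $4|F|M$, since a formula may occur with multiplicity up to two in the antecedent \emph{and} up to two in the succedent of a set--sequent (any finite constant works, so nothing is affected); and your argument nowhere uses that the rules in $\calr$ are slim or ordered --- the $\calr$--set hypothesis alone suffices --- so you have in fact proved a marginally more general statement than the one asserted.
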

\begin{proof}
Let $\cals'$ consist of all set--sequents that consist of subformulas of formulas in $S$. $\cals$ denote the $\cals'$ union all sequents that are $\smll$-lower than a sequent in $\cals'$. Because of the subformula property that box rules as well as rules in $\Gp$ satisfy, every $\calr$--set proof of $S$ contains only sequents in $\cals$. 
\end{proof}

\begin{lemma} 
 \label{lemsetproofs}
For every extension $\G$ of $\Gp$ by ordered box rules and for every set $\calr$ of slim box rules: for every proof in $\G +\calr$, there exists an $\calr$--set proof in $\G +\calr$ of the same endsequent of depth no greater than the original proof.   
\end{lemma}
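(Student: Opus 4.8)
The plan is to argue by induction on the depth of the given proof in $\G+\calr$, transforming it bottom-up into an $\calr$--set proof of the same endsequent without increasing the depth. So suppose $D$ is a proof in $\G+\calr$ of an endsequent $S$ with last inference an instance $S_1\,\dots\,S_n / S$ of some rule. By the induction hypothesis, each $S_i$ has an $\calr$--set proof $D_i$ in $\G+\calr$ of depth no greater than the corresponding subproof of $D$. The task is to assemble the $D_i$ under one more inference that is again an instance of the same rule and, if that rule lies in $\calr$, is a \emph{set}--instance. There are two cases depending on whether the last rule of $D$ belongs to $\calr$ or to $\G$.

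First I would treat the case where the last rule is an ordered box rule $R$ of $\G$. Here nothing special is demanded of the premisses (the $\calr$--set condition only constrains instances of rules in $\calr$), so I just need $S_1\,\dots\,S_n / S$ to remain an instance of $R$; but it already is, and placing the $D_i$ on top of it yields an $\calr$--set proof of $S$ of the required depth. The rules of $\Gp$ itself are handled the same way, since the $\calr$--set condition says nothing about them either. The interesting case is when the last rule $R$ lies in $\calr$, hence is a slim box rule. Then I must convert the instance $S_1\,\dots\,S_n / S$ into the set--instance $S_1^*\,\dots\,S_n^* / S$. Slimness of $R$ guarantees precisely that $S_1^*\,\dots\,S_n^* / S$ is again an instance of $R$. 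What remains is to supply $\calr$--set proofs of the $S_i^*$ of depth at most that of the given subproofs of $S_i$. For this I would combine the induction hypothesis, which gives $\calr$--set proofs $D_i$ of the $S_i$, with the structural admissibility results of Lemma~\ref{lemweakcont}: passing from $S_i$ to $S_i^*$ means contracting the duplicated occurrences in $S_i$ that do not belong to $(S_i)_{\dbx}$ (and, where the multiset--to--set passage happens to remove nothing, doing nothing), and Lemma~\ref{lemweakcont} provides depth-preserving admissibility of contraction. One must also check that the contraction steps preserve the $\calr$--set property of the proof; this follows because the contraction transformation in the proof of Lemma~\ref{lemweakcont} either leaves box-rule instances alone or, via the third clause in the definition of box rule, replaces them by box-rule instances on correspondingly contracted premisses, so set--instances stay set--instances.

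The main obstacle I anticipate is bookkeeping around the precise relationship between a multiset sequent $S_i$ and its associated set--sequent $S_i^*$: one has to be careful that $S_i^*$ is obtained from $S_i$ purely by contractions (never by weakenings), which is exactly what the definitions of $\Ga_{\dbx}$ and $\Ga^*$ are designed to ensure — only formulas outside the $\dbx$-part get de-duplicated. A secondary point to get right is that when the slim rule is applied with premisses already in set form, the conclusion $S$ is unchanged, so the endsequent is genuinely preserved and the depth does not grow, since contraction is depth-\emph{preserving} rather than merely depth-non-increasing up to an additive constant. Once these two bookkeeping matters are in place, the induction goes through routinely.
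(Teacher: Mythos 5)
Your overall strategy matches the paper's: the two key ingredients are the same (slimness of the rules in $\calr$ turns $S_1\,\dots\,S_n/S$ into the instance $S_1^*\,\dots\,S_n^*/S$, and depth-preserving admissibility of contraction from Lemma~\ref{lemweakcont} supplies proofs of the $S_i^*$). The difference is in the induction scaffolding: you induct on the depth of the proof and work bottom-up through the last inference, whereas the paper inducts on the position and number of the violating $\calr$-inferences and repairs them one at a time. Your version is arguably cleaner, but it creates one obligation the paper deliberately avoids: you apply the induction hypothesis first (obtaining $\calr$--set proofs of the $S_i$) and then contract, so you must show that the contraction procedure of Lemma~\ref{lemweakcont} preserves the $\calr$--set property. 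Your justification of this --- ``set--instances stay set--instances'' because the third clause of the box-rule definition replaces premisses $S_i$ by $S_i'\cdot S_i''$ --- is not actually immediate: deleting occurrences from a set--sequent can destroy set--sequent-hood (the multiset $\dbx\{\psi,\bx\psi\}$ is a set--sequent antecedent, but removing a single occurrence of $\bx\bx\psi$ from it leaves $\bx\psi$ with multiplicity two outside any $\dbx$-part), so this claim would need a genuine argument that the paper does not provide. The gap is easily repaired by reversing the order of operations, which is in effect what the paper does: first contract the \emph{original} subproof of $S_i$ to obtain a proof of $S_i^*$ of no greater depth (Lemma~\ref{lemweakcont} needs no set--ness hypothesis), and only then invoke the induction hypothesis on that proof of $S_i^*$ to make it $\calr$--set. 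With that reordering your argument goes through; as written, the preservation claim is the one unsupported step.
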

\begin{proof}
Consider a proof in $\G' = \G +\calr$. 
With induction on the depth $d(\cald)$ of the lowest inferences that violate that $\cald$ is  $\calr$--set, with a subinduction to the number $m(\cald)$ of those lowest inferences that violate that $\cald$ is $\calr$--set. If $d(\cald)=0$, then $\cald$ is $\calr$--set and there is nothing to prove. 
 
If $d(\cald)>0$, consider an inference 
\[
 \infer{S}{S_1 & \dots & S_n} 
\]
at depth $d(\cald)$ which is an application of a rule $R \in \calr$ such that not all $S_i$ are set--sequents. As $R$ is closed under contraction, Lemma~\ref{lemweakcont} implies that the sequent $S_i^*$ has a proof of the same or lower depth than the proof of $S_i$. Since $R$ is a set--rule, this implies $S$ has a proof of depth $\leq d(\cald)$ in which the last inference is a set--instance of $R$. Replacing the subproof of $S$ in $\cald$ by this proof results in a proof $\cald'$ with the same endsequent as $\cald$ for which either $d(\cald')<d(\cald)$, or $d(\cald')=d(\cald)$ and $m(\cald') < m(\cald)$. In both cases the induction hypothesis applies and we obtain a proof of the endsequent of $\cald$ that is $\calr$--set. 
\end{proof}

\section{From standard proofs to circular proofs}
 
\begin{lemma}
 \label{lemmain}
For every extension $\G$ of $\Gp$ by ordered box rules: if $R_1,R_2$ are slim box rules such that $R_1^a \adm_{\GR_1} R_2^a$ and $R_1^c=R_2^c$, then $\af_{\GR_1} S$ implies $\afinf_{\GR_2} S$. 
\end{lemma}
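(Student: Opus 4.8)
Given a standard proof $\cald$ of $S$ in $\GR_1$, I want to transform it into a circular proof of $S$ in $\GR_2$. The first observation is that, since $R_1^c = R_2^c$, the two calculi $\GR_1$ and $\GR_2$ differ only in the premiss of the box rule, and crucially every instance of $R_1$ in $\cald$ has a conclusion that is also the conclusion of an instance of $R_2$ (with generally a different premiss). So the idea is to keep the tree shape of $\cald$ intact wherever possible, but replace each application of $R_1$ by the corresponding application of $R_2$, and then reconnect the pieces. The place where this fails naively is exactly at the premiss of a box-rule instance: if $\cald$ contained an application
\[
 \infer[R_1]{S_0}{P_1}
\]
then in $\GR_2$ the conclusion $S_0$ is obtained by $R_2$ from a (possibly different) premiss $P_2$, and we need a $\GR_2$-proof (or circular proof) of $P_2$ rather than $P_1$. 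Here is where the hypothesis $R_1^a \adm_{\GR_1} R_2^a$ enters: since $R_1^c=R_2^c$, for any instance the premisses of $R_1$ admissibly derive the premisses of $R_2$ in $\GR_1$, so from the existing subproof of $P_1$ in $\GR_1$ we get $\af_{\GR_1} P_2$.

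\textbf{Induction and the circular closing.} So I would argue by induction on the depth of $\cald$, proving something slightly stronger in order to feed the induction: for every sequent $T$, if $\af_{\GR_1} T$ then $\afinf_{\GR_2} T$. If the last inference of $\cald$ is an instance of a rule of $\G$ (the shared non-box part, which includes all of $\Gp$), we simply apply the induction hypothesis to the immediate subproofs and reapply the same rule; the depth drops, so this is unproblematic, and circular leaves are inherited. If the last inference is an instance of $R_1$ with conclusion $S_0$ and premiss $P_1$, then as above we obtain $\af_{\GR_1} P_2$ where $P_2$ is the premiss of the matching $R_2$-instance; but $P_2$ need not have smaller depth than $\cald$, so I cannot directly apply the induction hypothesis to it. This is the crux and it is exactly what the circular-proof mechanism is for: the subformula/slimness constraints mean that only finitely many sequents can ever occur above $S_0$, so along any branch we must eventually repeat a sequent; at a repeated occurrence we stop and declare the leaf circular. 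Concretely, I would process box-rule nodes in a breadth-first manner, maintaining the set of ancestor sequents on the current branch; when expanding a box node whose premiss, after conversion, would reproduce an already-occurring ancestor sequent, close it off as a circular leaf instead of recursing.

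\textbf{Why the process terminates — the main obstacle.} The real work, and the step I expect to be delicate, is proving that this construction halts and yields a \emph{finite} derivation tree. The bound on the sequents that can appear comes from Lemma~\ref{lemfinite} together with Lemma~\ref{lemsetproofs}: since $R_1$ and $R_2$ are slim and $\G$ extends $\Gp$ by ordered box rules, we may assume all proofs involved are $\calr$--set, hence draw sequents only from the finite set $\cals$ associated with $S$. So every branch is finite (bounded by $|\cals|$ before a repeat forces a circular leaf), and König's lemma — or just finite branching — gives finiteness of the whole tree. I must be careful that the $\GR_1$-subproof of $P_2$ supplied by admissibility can itself be taken to be $\calr$--set over the same finite sequent pool, and that converting its own internal $R_1$-instances to $R_2$ does not introduce sequents outside $\cals$; this is where slimness and the subformula property of box rules must be invoked uniformly, not just at the root. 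A secondary subtlety is the bookkeeping of multisets versus set-sequents: circularity in $\GR_2$ is defined via equality of sequents, not set-sequents, so I should phrase the repeat-detection in terms of the actual $\calr$--set proof, whose sequents are already set-sequents on the relevant (box-rule) premisses, so equality there is genuine sequent equality and the resulting leaves are legitimately circular.
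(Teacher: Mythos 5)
Your proposal is correct and follows essentially the same route as the paper: use $R_1^c=R_2^c$ together with $R_1^a \adm_{\GR_1} R_2^a$ to replace each $R_1$-instance by the matching $R_2$-instance with a fresh $\GR_1$-provable premiss, keep everything $\calr$--set so that Lemmas~\ref{lemsetproofs} and~\ref{lemfinite} confine all occurring sequents to a finite pool, and turn forced repetitions along branches into circular leaves. The only difference is organizational: the paper realizes the unfolding as a sequence of complete finite proofs $\cald_0,\cald_1,\dots$ in which the lowest $R_1$-applications are pushed ever higher and then truncates a late member of the sequence at repeated sequents, whereas you build the circular proof directly with on-the-fly closure at repeated box-rule premisses --- the same termination argument in a different (arguably cleaner) packaging.
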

\begin{proof}
First we need to introduce some terminology. 
Given a derivation $\cald$, let $h^R_\cald$ denote the height of the lowest application of $R$ 
in $\cald$, where the height on a application of a rule $R$ is the number of nodes from the root of the tree to the conclusion of that application. If $\cald$ does not contain applications of $R$ we put $h^R_\cald=0$. With $n^R_\cald$ we denote the number of applications of $R$ at height $h^R_\cald$ in $\cald$. 

Let $\G$ be $\Gp$ extended by $R_1$ and $R_2$. 
Suppose $\af_{\GR_1} S$ and let $\cald_0$ be an $\{R_1,R_2\}$--set proof of $S$ in $\GR_1$, which exists by the previous lemma. We construct a sequence 
$\cald_0,\cald_1,\cald_2, \dots$ of $\{R_1,R_2\}$--set proofs in $\G$ with the following properties, where 
$h_i = h_{\cald_{i}}^{R_1}$ and $n_i = n_{\cald_i}^{R_1}$. 
For every $i$ either $n_i =0$, or 
$h_{i+1} = h_i$ and $n_{i+1} < n_i$, or 
$h_{i+1} > h_i$. In no $\cald_i$ there are applications of 
$R_2$ above applications of $R_1$. In other words, subproofs that end in an application of 
$R_1$, are proofs in $\GR_1$. 

If $\cald_i$ contains no application of $R_1$, then the sequence stops at $\cald_i$ with $n_i =0$. Otherwise 
consider the leftmost application of $R_1$ at height $h_i$ and let $S_0$ and $S_1$ be its conclusion and its premiss, respectively. The subproof of $S_0$ therefore is a proof in $\GR_1$. Since $S_1=R_1^a \adm_{\GR_1} R_2^a$, there exists a proof in $\GR_1$ of $R_2^a$. As $R_1$ is a slim rule, Lemmas~\ref{lemweakcont} and \ref{lemsetproofs} imply that there is an $\{R_1\}$--set proof of $(R_2^a)^*$ in $\GR_1$. Let $\cald$ denote this proof followed  by an application of $R_2$. Thus $\cald$ is an $\{R_1,R_2\}$--set proof of $S_0$. Let $\cald_{i+1}$ be the result of replacing the considered subproof of $S_0$ by $\cald$. We show that it has the required properties. 

That there is no application of $R_2$ above applications of $R_1$ is clear. If $n_i >1$, 
then $n_{i+1} = n_i - 1 < n_i$ and $h_i=h_{i+1}$. If, on the other hand, $n_i=1$, then $h_{i+1} > h_i$ or $n_{i+1}=0$. 
This proves that a sequence of proofs as described above can be constructed. 

Since all $\cald_i$ are $\{R_1,R_2\}$--set proofs in $\G$ it follows from Lemma~\ref{lemfinite} that there exists a finite set of sequents $\cals$ such that every sequent that occurs in some $\cald_i$ belongs to $\cals$. There are two possibilities: the sequence of the $\cald_i$ is finite or it is infinite. It follows from the construction 
that in the first case the last proof in the sequence does not contain applications of $R_1$. Thus it is a proof in $\GR_2$. Hence $\af_{\GR_2} S$ and therefore $\afinf_{\GR_2} S$. If the sequence is infinite,
Consider $\cald_i$ for an $i$ for which $h_i$ is larger than the number of sequents in $\cals$. The length of any branch in $\cald_i$ is either greater than $h_i$ or at most $h_i$. In the last case, it cannot contain applications of $R_1$. In the first case, the sequent at height $h_{i+1}$ has to occur at that branch at a height lower than $h_i$ as well. Therefore, if we cut away all nodes at height $h_{i+2}$ and  higher we obtain a circular proof of $S$ in $\GR_2$. 
\end{proof}

\section{From circular proofs to standard proofs}

\begin{lemma}
 \label{leminbetween}
For every extension $\G$ of $\Gp$ by ordered box rules, 
if in a proof of a sequent in $\G +R$ there is a branch with two nodes with the same label, then there is an application of $R$ between these two occurrences along the branch. 
\end{lemma}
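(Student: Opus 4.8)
The key observation is that every rule in $\Gp$ is $\smll$-decreasing on sequents (its premisses are strictly lower than its conclusion in the order $\smll$), whereas the only rules that may fail to decrease complexity are box rules — and $R$ is a box rule whose conclusion has the form $\bx S \cdot (\Ga \seq \Sig)$, so its conclusion is $\smll$-strictly above or at least incomparable with its premisses, which consist only of subformulas of formulas in $S$. So the plan is to argue that along a branch, if no application of $R$ occurs between two nodes, then one can only go strictly downward in the well-order $\smll$, contradicting the assumption that the two nodes carry the same label.

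First I would fix a branch and two nodes $N_1$ (lower) and $N_2$ (higher) on it with the same label $S$, and suppose for contradiction that no application of $R$ occurs on the segment of the branch strictly between them. Every inference on that segment is then an instance of a rule of $\Gp$. The second step is to verify the claim that every rule of $\Gp$ is $\smll$-decreasing: inspecting the eight rule schemas, each premiss is obtained from the conclusion by replacing a compound formula by finitely many formulas of strictly smaller complexity (e.g.\ $\phi \en \psi$ by $\phi,\psi$; $\phi \imp \psi$ by $\phi$ and $\psi$; $\phi \of \psi$ by $\phi$ and $\psi$; axioms have no premisses), and the antecedent/succedent side is otherwise unchanged, so each premiss is $\smll_{dm}$-below the conclusion, hence $\smll$-below as sequents — and in particular strictly $\smll$-below since the replaced formula genuinely disappears from the multiset. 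Third, moving from $N_1$ up to $N_2$ along the branch we pass through a chain of such inferences, so the label at $N_2$ is $\smll$-strictly below the label at $N_1$; but $\smll$ is a partial order (irreflexive on the strict part, as $\smll_{dm}$ is a strict part of the Dershowitz–Manna well-order), so $S \smll S$ is impossible. This contradiction shows an application of $R$ must occur between the two nodes.

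One small point to be careful about: the segment between $N_1$ and $N_2$ might be empty, i.e.\ $N_2$ could be the immediate parent of $N_1$ via a single inference. That inference cannot be an instance of $R$ only if it is an instance of a $\Gp$ rule, but then it is $\smll$-strictly decreasing, again yielding $S \smll S$. So the statement holds even in this degenerate case, and the argument above already covers it since a one-step chain is still a nonempty chain of strict decreases.

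The main obstacle — and it is really a bookkeeping matter rather than a conceptual one — is to confirm that every $\Gp$ rule is \emph{strictly} $\smll$-decreasing and not merely weakly so. This hinges on the fact that $\smll$ on sequents is defined to be $\sml$ together with the requirement that the two sequents differ \emph{as sequents} (as multisets), and that in each $\Gp$ inference the principal formula is genuinely removed while being replaced by strictly simpler formulas, so the conclusion and premiss differ as multisets and the premiss is $\sml$-below. (For the axioms there is nothing to check.) No use of slimness, orderedness of $\calr_\G$, or properties specific to box rules other than $R$ is needed; in fact the statement as phrased only mentions a single added rule $R$, and the argument is insensitive to whether $\G$'s own box rules appear on the segment — but they cannot, since we assumed the only rules used there are non-$R$, and any box rule of $\G$ would have to be $R$ if $\G$ has none other; more precisely, re-reading the hypothesis, $\G$ may contain box rules, but the lemma is about $\G+R$ and the branch between the repeated labels, so if a box rule of $\G$ occurs there it still need not be $R$ — I would therefore strengthen the reading to: the inferences between $N_1$ and $N_2$ are $\Gp$-inferences or box-rule inferences, and only $R$-inferences can fail to be $\smll$-decreasing, because $\G$'s box rules are assumed \emph{ordered}, hence $\smll$-decreasing by definition. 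With that, the same contradiction goes through.
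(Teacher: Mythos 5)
Your argument is correct and is exactly the paper's (one-line) proof, expanded: every rule of $\Gp$ is strictly $\smll$-decreasing, the box rules of $\G$ are ordered and hence strictly $\smll$-decreasing by definition, so a repeated label on a branch forces an application of the only remaining rule, $R$. Your final re-reading of the hypothesis (covering $\G$'s own box rules via orderedness, not just the $\Gp$ rules) is the right one and closes the only gap in your first pass.
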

\begin{proof}
In all rules in $\G$ the premisses are $\smll$--lower than the conclusion. 
\end{proof}
 





Given a calculus $\G$ and two rules $R_1$ and $R_2$, calculus $\GR_2$ is the {\em circular companion} of calculus $\GR_1$ if 
there exist formulas $\varchi(p)$ and $\eta(p)$ such that for any instance $S_1 \dots S_n/S_0$ of $R_2$ and  for all multisets $\Pi$ and $\Sig$ (recall that $\varchi(S)$ stands for $\varchi(I(S))$ and $\varchi(\Ga)$ for $\{ \varchi(\phi) \mid \phi\in\Ga \}$, and likewise for $\eta$): 


\begin{itemize}
\itm $R_1^a \adm_{\GR_1} R_2^a$ and $R_1^c=R_2^c$; 
\itm $\af_{\GR_1}\, \eta(\phi) \imp \phi$ for all formulas $\phi$; 
\itm $\{ \varchi(\Pi),\eta(\Sig),S_i^a \seq S_i^s \mid 1\leq i\leq n\} \adm_{\GR_1}\, 
     \varchi(\Pi \cup \Sig),S_0^a \seq S_0^s$;
\itm $\varchi(\Pi),\eta(\Sig),\varchi(S),S^a \seq S^s \adm_{\GR_1}\, 
     \varchi(\Pi), \eta(\Sig),S^a \seq S^s$ for any sequent $S$;
\itm for every instance $S_1' \dots S_n'/S_0'$ of a rule in $\G$, 
     $S'\cdot S_1' \dots S'\cdot S_n'/S' \cdot S_0'$ is an instance as well, for 
     $S'$ of the form $(\chi(\Pi),\eta(\Sig) \seq \,)$. 
\end{itemize}

\begin{remark}
 \label{remcompanion}
The last two requirements in the definition of circular companions imply that for such companions also holds:
\[
 \varchi(\Pi),\eta(\Sig),\eta(S_0),S_i^a \seq S_i^s \mid 1\leq i\leq n\} \adm_{\GR_1}\, 
     \varchi(\Pi \cup \Sig),S_0^a \seq S_0^s. 
\]
\end{remark}

\begin{remark}
 \label{remcircular}
$\GKf$ is the circular companion of $\GGL$ by taking $\eta(p)=\dbx p$ and $\varchi(p)=\bx p$. In fact, for any extension $\G$ of $\Gp$, $\G+R_{\Kf}$ is the circular companion of $\G+R_{\GL}$ for the same $\eta$ and $\varchi$. That the second requirement holds is trivial. For the third one the following observations suffice, recalling that $\bx(\bx\Ga \seq \bx\phi)$ denotes $\bx(\ben\bx\Ga \imp \bx\phi)$.  
\[
 \begin{array}{ll}
  \bx\Pi,\dbx\Sig,\dbx\Ga \seq \phi & \adm_{\GGL} \\
  \dbx\Pi,\dbx\Sig,\dbx\Ga,\bx\phi \seq \phi & \adm_{\GGL} \\
  \bx\Pi,\bx\Sig,\bx\Ga \seq \bx\phi.
 \end{array}
\]
The fourth requirement follows from Lemma~\ref{lemloeb} with $\phi = I(S)$, and the first requirement is left to the reader. 
\end{remark}

\begin{lemma}
 \label{lemmaintoo}
For every extension $\G$ of $\Gp$ by ordered box rules: 
if $R_1,R_2$ are slim box rules such that $\GR_2$ is the  circular companion of $\GR_1$, 
then $\afinf_{\GR_2} S$ implies $\af_{\GR_1} S$.
\end{lemma}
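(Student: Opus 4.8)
The plan is to turn a circular proof $\cald$ of $S$ in $\GR_2$ into a standard proof in $\GR_1$. The intuition is the one visible in the circular proof of L\"ob's principle: a loop in $\cald$ is a backedge from a circular leaf up to an ancestor $m$ carrying the same label, and by Lemma~\ref{leminbetween} it must cross an application of $R_2$; one ``unfolds'' each such loop exactly once and then discharges the circular leaf using the extra premiss material that $R_1$ provides over $R_2$ --- in the paradigmatic case, $R_{\GL}$ carries the boxed succedent formula in its premiss where $R_{\Kf}$ does not, and this L\"ob-style hypothesis is exactly what closes off the loop. The five clauses in the definition of circular companion are precisely the closure properties needed to push a suitable decoration of $\cald$ through each kind of rule and to make every circular leaf dischargeable.

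\textbf{The construction.} I would argue by induction on the number $k$ of applications of $R_2$ in $\cald$. If $k=0$, Lemma~\ref{leminbetween} shows that no branch of $\cald$ repeats a label, so $\cald$ has no circular leaves and is already a standard proof in $\G\subseteq\GR_1$. If $k>0$, I would single out a topmost application $\rho$ of $R_2$, with conclusion $S_0$ and premisses $S_1,\dots,S_n$; since $\rho$ is topmost, the subderivations above $S_1,\dots,S_n$ contain no further application of $R_2$, so by Lemma~\ref{leminbetween} their only circular leaves point at $S_0$ (with $\rho$ the $R_2$-application in the loop) or at a node below $S_0$. I would replace $\rho$ by an application of $R_1$ --- legitimate because $R_1^c=R_2^c$ --- and weaken the loop hypothesis $\eta(S_0)$ into every antecedent in the subderivations above it, carrying along also the accumulated $\varchi$- and $\eta$-decorations. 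That the result is again a $\GR_1$-derivation of $S_0$ is exactly what Remark~\ref{remcompanion} provides, together with the last clause of the definition of circular companion and depth-preserving admissibility of weakening (Lemma~\ref{lemweakcont}); the fourth clause --- which for $\GL$ is the L\"ob-style admissibility of Lemma~\ref{lemloeb} --- is what absorbs any $\varchi$-formula reintroduced by a deeper loop. Every circular leaf that pointed at $S_0$ has now become a sequent of the form $(\varchi(\Pi),\eta(\Sig),\eta(S_0)\seq\,)\cdot S_0$; since $\af_{\GR_1}\eta(S_0)\imp I(S_0)$ by the second clause and $I(S_0)$ derives the sequent $S_0^a\seq S_0^s$, such a sequent is provable in $\GR_1$, so the leaf is discharged by a genuine subderivation. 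What is left is a circular proof with $k-1$ applications of $R_2$, to which the induction hypothesis applies; composing the pieces yields $\af_{\GR_1}S$.

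\textbf{Main obstacle.} The delicate point --- and the reason the induction really has to be run with a strengthened hypothesis rather than as purely local surgery --- is the global coherence of the decorations. Weakening $\eta(S_0)$ into the subderivation above $\rho$ changes the labels of the nodes there, and if one of those nodes is the target of a circular leaf belonging to a \emph{different} loop, or is itself a circular leaf pointing below $\rho$, the corresponding backedge is broken unless that other loop is decorated in a matching way. I therefore expect the right formulation to attach to every node $v$ of $\cald$ a pair of multisets $(\Pi_v,\Sig_v)$ recording which loops $\cald$ is inside at $v$ and, for each, whether $v$ lies above or below the relevant application of $R_2$, and to prove by induction that decorating $v$ by $(\varchi(\Pi_v),\eta(\Sig_v)\seq\,)\cdot S_v$, replacing each $R_2$-application by an $R_1$-application, and replacing each circular leaf by its discharging subderivation, converts $\cald$ into a well-formed standard proof in $\GR_1$ (of $S$ at the root, where $\Pi_v=\Sig_v=\varnothing$). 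Getting this assignment to be consistent across all loops, so that at each stage one genuinely has a $\GR_1$-derivation and the construction terminates --- each loop unfolded exactly once --- is the technical heart of the proof, and it is exactly at the fourth clause (Lemma~\ref{lemloeb}) that properties special to $\GL$ enter, which is why the argument does not obviously transfer to other logics.
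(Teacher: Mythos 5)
Your plan contains all the right ingredients --- you correctly identify the role of each clause of the circular-companion definition, including that the third clause promotes the loop hypothesis from $\eta$ to $\varchi$ when the branch crosses $R_2$ and that the fourth clause (the L\"ob-style admissibility) is what absorbs it at the target node --- but the induction you actually propose to run, on the number $k$ of $R_2$-applications with topmost-first surgery, does not close, and for exactly the reason you flag at the end. After the surgery at a topmost $\rho$, the subtree rooted at $S_0$ proves not $S_0$ but a decorated sequent $\varchi(\Pi\cup\Sig),S_0^a\seq S_0^s$, where $\Pi\cup\Sig$ records the loops whose leaves lie above $\rho$ but whose targets lie strictly below it; so the object handed to your induction hypothesis is not a circular proof of $S$ in $\GR_2$ with $k-1$ applications of $R_2$, the labels of all nodes below $S_0$ must change to accommodate the decoration, and the backedges of other loops targeting those nodes are no longer backedges. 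Your closing paragraph proposes repairing this by decorating every node with a pair of multisets and proving that the decorated tree is a $\GR_1$-proof; that repaired statement \emph{is} the paper's proof, so it should be the main argument rather than an afterthought.

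Concretely, the paper makes the decoration the induction hypothesis and inducts on the height of the derivation tree: calling a leaf an \emph{assumption leaf} if it is neither an axiom nor circular, and letting ${\it ap}_\cald$ (resp.\ ${\it nap}_\cald$) collect the formulas $I(S')$ for assumption leaves $S'$ whose branch does (resp.\ does not) contain an application of $R_2$, it proves
\[
 \af_{\GR_1}\ \varchi({\it ap}_\cald),\ \eta({\it nap}_\cald),\ S^a\seq S^s .
\]
This sidesteps the coherence problem entirely: a circular leaf pointing below the current root is simply an assumption leaf of the subtree, no loops need to be tracked globally, and its decoration ($\eta$ until its branch crosses an $R_2$-application, then $\varchi$ via the third clause) is determined branch-locally. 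The hypothesis is discharged exactly at the node labelled $S$ that the leaf points to: as $\varchi(S)$ via the fourth clause when the last rule is in $\G$ (Lemma~\ref{leminbetween} guarantees an intervening $R_2$, so the decoration is already $\varchi$ there), or as $\varchi(S)$ or $\eta(S)$ via Remark~\ref{remcompanion} when the last rule is $R_2$. For a circular proof both sets are empty at the root and the lemma follows. Your diagnosis of where the difficulty lies is accurate; what is missing is the realization that the height induction with the ${\it ap}/{\it nap}$ bookkeeping does not merely organize the surgery but replaces it.
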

\begin{proof} 
Given a derivation tree $\cald$ in $\GR_2$, a leaf labelled with sequent $S$ is an {\em assumption leaf\/} if it is not circular and $S$ is not an axiom.   
Denote by ${\it ap}_\cald$ and ${\it nap}_\cald$ the sets of formulas of the form $I(S)$, where $S$ is the label of an assumption leaf that has, respectively does not have, an application of $R_2$ along its branch. 

Suppose $\GR_2$ is the  circular companion of $\GR_1$ and let $(\varchi,\eta)$ be the witness of it.
We prove with induction to the height of a circular derivation tree $\cald$ in $\GR_2$ with root $S$:
\begin{equation}
 \label{eqg2tog1}
 \af_{\GR_1} \varchi({\it ap}_\cald), \eta({\it nap}_\cald),S^a \seq S^s.
\end{equation}
Since for a circular proof in $\GR_2$, both ${\it ap}_\cald$ and ${\it nap}_\cald$ are empty, this will prove the lemma. 

If $\cald$ consists of one sequent only, it is either an axiom of $\GR_2$, in which case \eqref{eqg2tog1} clearly holds, or it is an assumption leaf with no application of $R_2$ along its branch, which also implies \eqref{eqg2tog1} because $\eta(\phi)$ implies $\phi$ for all formulas $\phi$. 

Suppose the height of $\cald$ is greater than one and suppose the last inference of $\cald$ is 
an application of a rule $R$ and let $S_1,\dots,S_m$ be its premisses. The induction hypothesis and the fact that $\GR_1$ is closed under weakening gives for every $i$:
\begin{equation}
 \label{eqindhyp}
 \af_{\GR_1} \varchi(\bigcup_{i=1}^m{\it ap}_{\cald_i}), 
 \eta(\bigcup_{i=1}^m{\it nap}_{\cald_i}), S_i^a \seq S_i^s, 
\end{equation}
We distinguish the cases that any leaf in $\cald$ that is circular is circular in one of the $\cald_i$, and the opposite case. 
In the first case, if $R$ is one of the rules of $\G$, then ${\it ap}_\cald$ is equal to $\bigcup\{ {\it ap}_{\cald_i} \mid i\leq m\}$, and similarly for ${\it nap}_\cald$. Therefore \eqref{eqg2tog1} follows from the last requirement of circular companions and an application of $R$ to \eqref{eqindhyp}.  
If $R=R_2$, then ${\it nap}_{\cald}$ is empty and 
\[
 {\it ap}_{\cald} = \bigcup_{i=1}^m ({\it ap}_{\cald_i} \cup {\it nap}_{\cald_i}). 
\]
As $\GR_2$ is the  circular companion of $\GR_1$, it follows that 
$\af_{\GR_1} \varchi({\it ap}_{\cald}), S^a \seq S^s$, which implies \eqref{eqg2tog1}. 

Next, consider the case that in $\cald$ there is a circular leaf that is not circular in any of the 
$\cald_i$. Note that all such leafs are labelled with the same sequent as the endsequent of the proof, $S$, and that they may become assumption leafs in the $\cald_i$. Hence 
\[
 {\it ap}_{\cald} \cup {\it nap}_{\cald} = 
 (\bigcup_{i=1}^m ({\it ap}_{\cald_i} \cup {\it nap}_{\cald_i})) \backslash \{S\}.
\]
First consider the case that $R$ is one of the rules of $\G$. 
By Lemma~\ref{leminbetween} it follows that there is an application of $R_2$ along branches that have leaf $S$, which means that if $S$ occurs in $\bigcup_{i=1}^m ({\it ap}_{\cald_i} \cup {\it nap}_{\cald_i})$, it occurs in $\bigcup_{i=1}^m {\it ap}_{\cald_i}$, and therefore as $\varchi(S)$ in \eqref{eqindhyp}. 
An application of $R$ to \eqref{eqindhyp} gives 
\[
 \af_{\GR_1} \varchi({\it ap}_{\cald}), \eta({\it nap}_{\cald}), \varchi(S), S^a \seq S^s.
\] 
The fact that $\GR_2$ is the  circular companion of $\GR_1$ now implies \eqref{eqg2tog1}. 

If, on the other hand, $R=R_2$, then ${\it nap}_{\cald}$ is empty and for any $i$, $S$ may be in either ${\it ap}_{\cald_i}$ or ${\it nap}_{\cald_i}$. And thus appear as $\varchi(S)$ or $\eta(S)$ in \eqref{eqindhyp}. Then the fact that $\GR_2$ is the  circular companion of $\GR_1$ and Remark~\ref{remcompanion} imply \eqref{eqg2tog1}. 
\end{proof}

\section{Standard proofs versus circular proofs}
 \label{secresults}
The results in the previous section lead to a sufficient condition for being the circular companion of a logic, from which Shamkanov's results follow. 
 
\begin{theorem}
 \label{thmmain}
For every extension $\G$ of $\Gp$ by ordered box rules and for any slim box rules $R_1$ and $R_2$ such that $\GR_2$ is the circular companion of $\GR_1$:
\[
 \af_{\GR_1} S \text{ if and only if } \afinf_{\GR_2} S.
\]
\end{theorem}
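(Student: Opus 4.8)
The plan is to obtain the theorem as an immediate consequence of the two main lemmas already established, Lemma~\ref{lemmain} (left to right) and Lemma~\ref{lemmaintoo} (right to left), after observing that the hypotheses of the former are subsumed by the hypothesis ``$\GR_2$ is the circular companion of $\GR_1$''.

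First I would handle the direction $\af_{\GR_1} S \Imp \afinf_{\GR_2} S$. Here $\G$ is an extension of $\Gp$ by ordered box rules and $R_1,R_2$ are slim box rules, so the structural side-conditions of Lemma~\ref{lemmain} are already in place. It remains to note that the first bullet in the definition of circular companion states precisely $R_1^a \adm_{\GR_1} R_2^a$ and $R_1^c = R_2^c$, which is exactly the extra hypothesis Lemma~\ref{lemmain} requires. Hence Lemma~\ref{lemmain} applies verbatim and gives $\afinf_{\GR_2} S$.

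For the direction $\afinf_{\GR_2} S \Imp \af_{\GR_1} S$, there is nothing further to do: Lemma~\ref{lemmaintoo} is stated under exactly the hypotheses of the theorem ($\G$ an extension of $\Gp$ by ordered box rules, $R_1,R_2$ slim box rules, $\GR_2$ the circular companion of $\GR_1$), and its conclusion is the desired implication. Combining the two implications yields the biconditional.

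I do not expect any real obstacle at this stage, since all the work has been discharged into Lemmas~\ref{lemmain} and~\ref{lemmaintoo} and into Lemma~\ref{lemsetproofs}/Lemma~\ref{lemfinite} on which they rest; the only point worth spelling out is the trivial bookkeeping that the ``circular companion'' hypothesis is strictly stronger than what the left-to-right direction needs. If anything deserves a remark afterwards, it is that Shamkanov's original theorem ($\af_{\GGL} S \Ifff \afinf_{\GKf} S$) then follows by instantiating $\G = \Gp$, $R_1 = R_\GL$, $R_2 = R_\Kf$ and invoking Remark~\ref{remcircular} (with $\eta(p) = \dbx p$, $\varchi(p) = \bx p$) to verify the circular companion conditions, together with Theorem~\ref{thmcompleteness} to pass between $\GGL$ and $\GL$.
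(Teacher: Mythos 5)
Your proposal is correct and matches the paper exactly: Theorem~\ref{thmmain} is obtained by combining Lemma~\ref{lemmain} (whose extra hypothesis is precisely the first bullet of the circular companion definition) with Lemma~\ref{lemmaintoo}. Nothing further is needed.
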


\begin{theorem}
For every extension $\G$ of $\Gp$ by ordered box rules: 
\[
 \af_{\G +R_{\GL}} S \text{ if and only if } \afinf_{\G +R_{\Kf}} S.
\]
\end{theorem}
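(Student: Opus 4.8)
The plan is to obtain this statement as the special case $R_1 = R_{\GL}$, $R_2 = R_{\Kf}$ of Theorem~\ref{thmmain}. So the whole proof reduces to checking the two hypotheses of that theorem for this choice of rules: that $R_{\GL}$ and $R_{\Kf}$ are slim box rules, and that $\G + R_{\Kf}$ is the circular companion of $\G + R_{\GL}$.

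For the first point, both rules have conclusion $\Pi,\bx\Ga \seq \bx\phi,\De$, which is $\bx(\Ga\seq\phi)\cdot(\Pi\seq\De)$ with $\Pi,\De$ fresh, and all premisses consist of subformulas of $\ben\Ga\imp\phi$; the third clause in the definition of box rule is the short verification already alluded to when it is remarked that $R_{\GL}$ and $R_{\Kf}$ are slim, and slimness of both rules is exactly the observation made just after the definition of slim rule. For the second point, this is the content of Remark~\ref{remcircular}, taking $\eta(p)=\dbx p$ and $\varchi(p)=\bx p$: the second requirement of a circular companion is trivial, the third and fourth are the two displayed admissibilities there (the fourth being Lemma~\ref{lemloeb} with $\phi=I(S)$), the fifth holds because prefixing a sequent of the form $(\bx\Pi,\dbx\Sig\seq\,)$ to conclusion and premisses of any $\Gp$-rule or box rule again yields an instance, and the first requirement amounts to $R_{\GL}^c=R_{\Kf}^c$ (the conclusions literally coincide) together with $\dbx\Ga,\bx\phi\seq\phi \adm_{\G+R_{\GL}} \dbx\Ga\seq\phi$, which is the instance $\Pi=\varnothing$, $\Sig=\Ga$ of Lemma~\ref{lemloeb}. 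One bookkeeping point to flag: Lemma~\ref{lemloeb} and Remark~\ref{remcircular} are stated for $\GGL$, so I would note that their proofs use only $R_{\GL}$ and the admissibility $S \adm \bx S$ (which holds in every $\G+R_{\GL}$), hence go through verbatim over any extension $\G$ of $\Gp$ by ordered box rules.

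With both hypotheses verified, Theorem~\ref{thmmain} immediately gives $\af_{\G+R_{\GL}}S$ if and only if $\afinf_{\G+R_{\Kf}}S$ — the ``only if'' direction being Lemma~\ref{lemmain} and the ``if'' direction Lemma~\ref{lemmaintoo}. Instantiating $\G$ as $\Gp$ itself (no extra box rules) yields $\af_{\GGL}S$ iff $\afinf_{\GKf}S$, and combining this with Theorem~\ref{thmcompleteness} recovers Shamkanov's original statement that $\af_{\GL}I(S)$ iff $S$ has a circular proof in $\GKf$. I do not expect any genuine obstacle here: all the substance lives in Theorem~\ref{thmmain} and in the circular-companion verification of Remark~\ref{remcircular}; the only mild care required is the observation that the auxiliary admissibility facts phrased for $\GGL$ remain valid over an arbitrary admissible base $\G$.
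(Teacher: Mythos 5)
Your proposal is correct and takes essentially the same route as the paper, which proves this theorem simply by invoking Theorem~\ref{thmmain} together with Remark~\ref{remcircular} for $R_1=R_{\GL}$ and $R_2=R_{\Kf}$. Your extra care about Lemma~\ref{lemloeb} being stated for $\GGL$ is reasonable but already anticipated by the remark, which explicitly asserts the circular-companion property for arbitrary extensions $\G$ of $\Gp$.
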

\begin{proof}
Use Theorem~\ref{thmmain} and Remark~\ref{remcircular} with $R_1=R_{\GL}$ and $R_2=R_{\Kf}$. 
\end{proof}

These theorems immediately give us Shamkanov's Theorem: 

\begin{corollary}
$\af_{\GGL} S$ if and only if $\afinf_{\GKf} S$.
\end{corollary}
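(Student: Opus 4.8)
The plan is to obtain the Corollary as the instance $\G = \Gp$ of the theorem immediately above it (equivalently, of Theorem~\ref{thmmain}). Nothing genuinely new has to be proved here: I would simply observe that $\Gp$ counts as an extension of $\Gp$ by the empty—hence vacuously ordered—set of box rules, and that by the naming conventions of Section~\ref{seccalculi} we have $\GGL = \Gp + R_{\GL}$ and $\GKf = \Gp + R_{\Kf}$.

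With that in place I would check the two remaining hypotheses. First, $R_{\GL}$ and $R_{\Kf}$ are slim box rules; this was recorded right after the definition of slimness. Second, $\GKf$ is the circular companion of $\GGL$; this is exactly Remark~\ref{remcircular}, witnessed by $\eta(p) = \dbx p$ and $\varchi(p) = \bx p$. Feeding $R_1 = R_{\GL}$ and $R_2 = R_{\Kf}$ into Theorem~\ref{thmmain} then delivers $\af_{\GGL} S \Ifff \afinf_{\GKf} S$, which is the claim.

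I do not expect any obstacle at this level: the proof is a single instantiation. If one insists on locating the real difficulty, it sits upstream, in verifying the circular-companion clauses for the pair $(\GGL,\GKf)$ in Remark~\ref{remcircular}—most notably the fourth clause, which is Lemma~\ref{lemloeb}, and the third, which rests on the derivability facts displayed there; these in turn draw on Avron's completeness of $\GGL$ for $\GL$ (Theorem~\ref{thmcompleteness}) and on admissibility of weakening and contraction (Corollary~\ref{corstructural}). For the Corollary itself, however, I would present only the one-line reduction to the preceding theorem.
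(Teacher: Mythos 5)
Your proposal is correct and matches the paper exactly: the corollary is obtained by instantiating the preceding theorem (equivalently Theorem~\ref{thmmain}) with $\G=\Gp$, $R_1=R_{\GL}$, $R_2=R_{\Kf}$, with the circular-companion hypothesis supplied by Remark~\ref{remcircular}. The paper likewise treats this as an immediate consequence with no further argument.
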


\subsection{Intuitionistic modal logics}
Inspection of the proofs of the theorems above show that they also hold when $\Gp$ is replaced by 
one of the standard single-conclusion Gentzen calculi for intuitionistic logic without structural rules, such as the propositional part of {\sf G3i} from \citep{troelstra&schwichtenberg96}, or Dyckhoff's calculus \citep{dyckhoff92}. If $\iGGL$ and $\iGKf$ denotes the extension of one of Dyckhoff's calculus by the single conclusion versions of the rules $R_{\GL}$ and $R_{\GL}$, respectively, we can conclude the following. 

\begin{theorem}
 \label{thmmainint}
$\af_{\iGGL} S$ if and only if $\afinf_{\iGKf} S$.
\end{theorem}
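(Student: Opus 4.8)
The plan is to obtain Theorem~\ref{thmmainint} as an instance of Theorem~\ref{thmmain}, exactly as the preceding two corollaries were obtained for the classical case, but now with $\Gp$ replaced by a single-conclusion intuitionistic base calculus $\G_0$ (the propositional part of {\sf G3i}, or Dyckhoff's calculus). The first task is therefore to check that every structural fact used in the proofs of Lemmas~\ref{leminversion}--\ref{lemsetproofs} and of Lemmas~\ref{lemmain}, \ref{leminbetween}, \ref{lemmaintoo} goes through verbatim when $\Gp$ is swapped for $\G_0$: namely depth-preserving admissibility of weakening and contraction, the inversion lemma for the propositional connectives present, the subformula property, and the well-foundedness of $\smll$ on sequents. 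For {\sf G3i} these are Lemmas~5.1.6 and~5.1.7 (and their neighbours) in \citep{troelstra&schwichtenberg96}; for Dyckhoff's calculus they are the analogous results in \citep{dyckhoff92}. The only point requiring a little care is the inversion lemma for implication on the left: in the single-conclusion setting the right premiss of $L\!\imp$ is $\Ga,\psi\seq\De$ but the left premiss is $\Ga\seq\phi$ (with the succedent emptied), so the statement and proof of Lemma~\ref{leminversion}(3) must be adjusted to the intuitionistic shape; this adjustment is routine and is exactly what is done in the references. Since the single-conclusion box rules $R_{\GL}$ and $R_{\Kf}$ still have their premiss $\dbx\Ga,\bx\phi\seq\phi$ resp.\ $\dbx\Ga\seq\phi$ consisting of subformulas of the active formulas, they are still box rules in the sense of Section~\ref{seccalculi}, and the third clause of the box-rule definition (the contraction-splitting clause) is trivial for them because their premisses carry no multiset parameter that could duplicate; hence they are slim and ordered, and the hypotheses ``extension of the base calculus by ordered box rules'' and ``$R_1,R_2$ slim box rules'' are met.

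With the infrastructure in place, the second task is to verify that, in the intuitionistic setting, $\G_0+R_{\Kf}$ is still the circular companion of $\G_0+R_{\GL}$ with the same witnesses $\eta(p)=\dbx p$ and $\varchi(p)=\bx p$. Here one simply re-reads Remark~\ref{remcircular}. The second requirement, $\af\,\dbx\phi\imp\phi$, is intuitionistically trivial. The third requirement is the chain
\[
 \bx\Pi,\dbx\Sig,\dbx\Ga \seq \phi \ \adm\
 \dbx\Pi,\dbx\Sig,\dbx\Ga,\bx\phi \seq \phi \ \adm\
 \bx\Pi,\bx\Sig,\bx\Ga \seq \bx\phi,
\]
whose first step uses only weakening (to replace $\bx\Pi$ by $\dbx\Pi$ and to add $\bx\phi$) and whose second step is an application of the single-conclusion $R_{\GL}$; both are available intuitionistically. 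The fourth requirement is the intuitionistic analogue of Lemma~\ref{lemloeb}, i.e.\ $\bx\Pi,\dbx\Sig,\bx\phi\seq\phi \adm_{\G_0+R_{\GL}} \bx\Pi,\dbx\Sig\seq\phi$; its proof as given is a bare sequence of admissible steps ($R\!\imp$, weakening, the fact that $S\adm\bx S$, which is immediate from $R_{\GL}$ applied with empty $\phi$-side, and one more $R_{\GL}$ step) none of which needs more than one formula in the succedent, so it survives the passage to single conclusion. The first and fifth requirements ($R_1^a\adm R_2^a$ with $R_1^c=R_2^c$, and preservation of $\G_0$-rule instances under left-weakening by a sequent of the form $(\varchi(\Pi),\eta(\Sig)\seq)$) are again immediate, the latter because weakening the antecedent of any {\sf G3i}/Dyckhoff rule instance yields a rule instance.

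Having checked both tasks, the conclusion is immediate: apply Theorem~\ref{thmmain} with $\G=\G_0$ (Dyckhoff's calculus, say), $R_1$ the single-conclusion $R_{\GL}$ and $R_2$ the single-conclusion $R_{\Kf}$, to get $\af_{\iGGL}S$ iff $\afinf_{\iGKf}S$. I expect the main obstacle, such as it is, to be purely bookkeeping: confirming that \emph{every} lemma between Section~\ref{seccalculi} and Theorem~\ref{thmmain} was proved without covertly using a second formula in the succedent or a classical structural rule. The one genuinely delicate spot is the contraction lemma (Lemma~\ref{lemweakcont}) in the $L\!\imp$ case, whose proof invokes the inversion lemma on both premisses; in the intuitionistic calculus the left premiss has an emptied succedent, so one must check that the inversion step still delivers $\Ga\backslash\{\phi\imp\psi\}\seq\phi$ (it does, since intuitionistic inversion for $L\!\imp$ on the left premiss is exactly this), and for Dyckhoff's calculus that the implication-left rules are split into the four special forms whose contraction behaviour is handled case by case in \citep{dyckhoff92}. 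Once that is granted, nothing else in the argument is sensitive to the move from classical to intuitionistic, and Theorem~\ref{thmmainint} follows.
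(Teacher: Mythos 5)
Your proposal is correct and follows exactly the route the paper takes: the paper gives no separate proof of Theorem~\ref{thmmainint}, justifying it only by the remark that ``inspection of the proofs'' shows Theorem~\ref{thmmain} and its supporting lemmas survive the replacement of $\Gp$ by a single-conclusion intuitionistic base calculus, with $R_\Kf$ still the circular companion of $R_\GL$ via $\eta(p)=\dbx p$ and $\varchi(p)=\bx p$. You have simply carried out that inspection explicitly, including the genuinely delicate points (single-conclusion inversion for $L\!\imp$ and the contraction lemma), so there is nothing to add.
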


\subsection{Grzegorczyk logic}
Recall that there is a cut--free sequent calculus for $\Sf$, which consists of $\Gp$ plus $R_{\Kf}$ and $R_{\sf T}$, where $R_{\sf T}$ is the rule 
\[
 \infer[R_{\sf T}.]{\Ga,\bx\phi \seq \De}{\Ga, \phi \seq \De}
\]
In \citep{avron84} it is shown that the calculus $\Gp+R_{\Grz}+R_{\sf T}$ has cut-elimination. In fact, it is shown that a variant of $\G+R_{\Grz}+R_{\sf T}$ with explicit weakening has cut-elimination, but it is not hard to see that this implies the former result. 

Note that $R_{\sf T}$ is an ordered rule. It is not a box rule, but it is not hard to see that the reasoning in the previous proofs about box rules applies to this rule as well. We therefore have the following. 

\begin{corollary}
$\af_{\Gp+R_{\Grz}+R_{\sf T}} S$ implies $\afinf_{\GSf} S$.
\end{corollary}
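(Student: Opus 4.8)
The plan is to obtain the corollary as an application of Lemma~\ref{lemmain}, with $\G := \Gp + R_{\sf T}$, $R_1 := R_{\Grz}$ and $R_2 := R_{\Kf}$; in the notation of that lemma $\GR_1$ is then $\Gp + R_{\Grz} + R_{\sf T}$ and $\GR_2$ is $\GSf$, so the conclusion of Lemma~\ref{lemmain} is precisely $\af_{\Gp+R_{\Grz}+R_{\sf T}}S$ implies $\afinf_{\GSf}S$. To invoke it I must check three things: that $R_{\Grz}$ and $R_{\Kf}$ are slim box rules, that $R_{\Grz}^c=R_{\Kf}^c$, and that $R_{\Grz}^a \adm_{\Gp+R_{\Grz}+R_{\sf T}} R_{\Kf}^a$. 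The middle condition is immediate, since both rules have conclusion $\Pi,\bx\Ga\seq\bx\phi,\De$; slimness of $R_{\Kf}$ is already observed in the text, and slimness of $R_{\Grz}$ is a routine check, because the antecedent $\bx\Ga,\bx(\phi\imp\bx\phi)$ of its single premiss consists of boxed formulas only, so passing to the associated set--sequent merely deletes repetitions of boxed formulas, and those can be absorbed into the side--multiset $\Pi$ of the conclusion, leaving an instance of $R_{\Grz}$.

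The heart of the argument is the admissibility $R_{\Grz}^a \adm_{\Gp+R_{\Grz}+R_{\sf T}} R_{\Kf}^a$, i.e. that $\af_{\Gp+R_{\Grz}+R_{\sf T}} \bx\Ga,\bx(\phi\imp\bx\phi)\seq\phi$ entails $\af_{\Gp+R_{\Grz}+R_{\sf T}} \dbx\Ga\seq\phi$, and similarly under every substitution. This mirrors Lemma~\ref{lemloeb}, with the Grzegorczyk axiom playing the role that L\"ob's axiom plays there. Starting from $\bx\Ga,\bx(\phi\imp\bx\phi)\seq\phi$, an application of $R\!\imp$ gives $\bx\Ga\seq\bx(\phi\imp\bx\phi)\imp\phi$; since the antecedent is boxed, normality together with axiom $4$ of $\Grz$ lets one prefix the succedent with a box, yielding $\bx\Ga\seq\bx(\bx(\phi\imp\bx\phi)\imp\phi)$; and cutting against the provable sequent form $\bx(\bx(\phi\imp\bx\phi)\imp\phi)\seq\phi$ of the Grzegorczyk axiom gives $\bx\Ga\seq\phi$, hence $\dbx\Ga\seq\phi$ by weakening. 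As the derivation is uniform in $\Ga$ and $\phi$, it covers all substitution instances. Since cut is admissible in $\Gp+R_{\Grz}+R_{\sf T}$ by \citep{avron84} (equivalently, since that calculus is sound and complete for $\Grz$), this reasoning is legitimate, and it is most transparent carried out at the level of the logic $\Grz$ itself.

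There remains the one point on which the corollary is not literally an instance of Lemma~\ref{lemmain}: the rule $R_{\sf T}$ in $\G$ is ordered but is not a box rule, whereas Lemma~\ref{lemmain}, through its appeals to Lemmas~\ref{lemweakcont}, \ref{lemfinite} and \ref{lemsetproofs}, is phrased for extensions by ordered box rules. As noted just before the corollary, $R_{\sf T}$ nevertheless has all the structural features of box rules that those proofs actually use: its premiss is built from subformulas of its conclusion, adjoining a fixed sequent to its conclusion and premiss again yields an instance, and the contraction case for $R_{\sf T}$ in Lemma~\ref{lemweakcont} goes through as for box rules. Granting this routine extension of the earlier lemmas so as to admit $R_{\sf T}$, all hypotheses of Lemma~\ref{lemmain} are met and the corollary follows. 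The main obstacle is the admissibility step: unlike the $\GL$/$\Kf$ case, where Lemma~\ref{lemloeb} suffices, here one must invoke the Grzegorczyk axiom and verify that the boxing step is available --- exactly the point at which the proof leans on special properties of $\Grz$ --- and, consistently with the paper's remark that the converse implication fails, this plan delivers only the stated direction.
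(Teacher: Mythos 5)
Your proposal is correct and follows exactly the route the paper takes: the paper's proof is the one-line instruction to argue as in Lemma~\ref{lemmain} with $\G=\Gp+R_{\sf T}$, $R_1=R_{\Grz}$ and $R_2=R_{\Kf}$, and you supply the hypothesis checks (slimness, equal conclusions, and the admissibility $R_{\Grz}^a \adm R_{\Kf}^a$ via the Grzegorczyk axiom, playing the role of Lemma~\ref{lemloeb}) together with the caveat, also made in the paper, that $R_{\sf T}$ is an ordered non-box rule to which the earlier structural lemmas nonetheless extend.
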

\begin{proof}
Proved in a similar way as Lemma~\ref{lemmain} with $\G=\Gp+R_{\sf T}$, $R_1=R_{\Grz}$ and $R_2=R_{\Kf}$. 
\end{proof}

The converse, however, does not hold, since L\"ob's principle has a circular proof in $\Kf$, as we saw, but is not provable in Grzegorczyk logic.







\end{document}